\newcommand{\RR}{{\mathbb R}}
\newcommand{\NN}{{\mathbb N}}
\def\A{{\mathcal A}}  
\def\M{{\mathcal M}}
\def\S{{\mathcal S}}
\def\gap{\hbox{\rm gap}}
\def\rep{\hbox{\rm rep}}
\numberwithin{equation}{section}
\newtheorem{theo}{Theorem}    
\newtheorem{prop}[theo]{Proposition}  
\newtheorem{coro}[theo]{Corollary}  
\newtheorem{lemma}[theo]{Lemma} 
\newtheorem{conj}[theo]{Conjecture} 
\theoremstyle{definition}
\newtheorem{defi}[theo]{Definition}  
\newtheorem{rema}[theo]{Remark}  
\newtheorem{exam}[theo]{Example}  
\begin{document}

\title[Syndeticity and independent substitutions]{Syndeticity and independent substitutions}
\author{Fabien Durand}
\address[F.D.]{\newline
Universit\'e de Picardie Jules Verne\newline
Laboratoire Ami\'enois de Math\'ematiques Fondamentales et Appliqu\'ees\newline
CNRS-UMR 6140\newline
33 rue Saint Leu\newline
80039 Amiens Cedex \newline
France.}
\email{fabien.durand@u-picardie.fr}
\author{Michel Rigo}
\thanks{This work has been supported by a grant ``Accord de coop\'eration CNRS/CGRI-FNRS, 
Commissariat G\'en\'eral des Relations Internationales de la Communaut\'e Fran\c{c}aise''}
\address[M.R.]{\newline
Universit\'e de Li\`ege\newline
Institut de Math\'ematique\newline
Grande traverse 12 (B 37)\newline
B-4000 Li\`ege\newline
Belgium.}
\email{M.Rigo@ulg.ac.be}

\begin{abstract}
  We associate in a canonical way a substitution to any abstract
  numeration system built on a regular language. In relationship with
  the growth order of the letters, we define the notion of two
  independent substitutions. Our main result is the following. If a
  sequence $x$ is generated by two independent substitutions, at
  least one being of exponential growth, then the factors of $x$
  appearing infinitely often in $x$ appear with bounded gaps. As an
  application, we derive an analogue of Cobham's theorem for two
  independent substitutions (or abstract numeration systems) one with
  polynomial growth, the other being exponential.
\end{abstract}

\maketitle

\section{Introduction}

A set $E\subset \NN$ is {\it $p$-recognizable} for some
$p\in\NN\setminus\{0,1\}$, if the language consisting of the $p$-ary
expansions of the elements in $E$ is recognizable by a finite
automaton \cite{Ei}. In 1969, A.~Cobham obtained the following result
\cite{Co1}. {\it Let $p,q\ge 2$ be two multiplicatively independent
  integers (i.e., $p^k \not = q^\ell$ for all integers $k,\ell>0$). A
  set $E\subset\NN$ is both $p$-recognizable and $q$-recognizable if
  and only if $E$ is a finite union of arithmetic progressions.}
 
A key part in all known proofs of this seminal theorem (and this
remark stands also for generalizations to non-standard positional
numeration systems) is to show that $E$ is {\it syndetic} (i.e., the
difference between two consecutive elements of $E$ is bounded), see
\cite{Ha,Du1,Du2}.

In this paper we study this syndeticity problem for a larger class of
numeration systems namely, for numeration systems built on infinite
regular languages, the so-called {\it abstract numeration systems}
\cite{LR}. In particular, these systems contain classical numeration
systems like the $k$-ary system or the Fibonacci system, but also more
``exotic'' systems for which the language of the numeration contains a
number of words of length $n$ bounded by a polynomial in $n$ (which is
contrasting with the usual exponential paradigm).

In 1972, A.~Cobham characterized $p$-recognizable sets of integers in
terms of constant length substitutions.  It turns out to be mainly the
same for abstract numeration systems (this is the purpose of Section
\ref{sec3}).  Hence we will often say that a set of integers
recognizable with respect to some abstract numeration system is {\it
  generated by a substitution}.  This will enable us to solve the
syndeticity problem for abstract numeration systems in terms of
substitutions. Let us also observe that with the formalism of
substitutions and in connection with the constructions of Section
\ref{sec3}, Cobham's theorem obtained in \cite{Du1} can be
directly translated for a large class of abstract numeration systems
(namely, those giving rise to substitutions of exponential growth
satisfying the assumptions of \cite{Du1}).

In \cite{Co1,Co2,Du1,Du2}, the involved substitutions $\sigma$ are
(exponentially) growing, meaning that the length of $\sigma^n(a)$ goes
to infinity with $n$, for all letters $a$. (This implies in particular
that one of the letter is of exponential growth and that none of them
has polynomial growth.)  The substitutions corresponding to abstract
numeration systems do not have this latter property: they can be
non-growing (in the polynomial case) and even worse, erasing. We take
care of this extra difficulty in Section \ref{sec4}.

The notion of multiplicatively independent integers can be generalized
to these substitutions by considering the maximal growth rate of the
letters and we are thus able to define ``independent'' substitutions.
Our main result (Theorem \ref{the:boundedgap}) can be roughly stated
as follows:

{\it If a set of integers $E$ is generated by two independent
  substitutions (one having exponential growth), then $E$ is
  syndetic}. We are not able to give a complete proof in the case of
two independent substitutions both having polynomial growth.

To conclude this paper, we obtain easily from the syndeticity an
analogue of Cobham's theorem for two substitutions (or equivalently
for two abstract numeration systems): one of exponential growth and
the other one of polynomial growth. Combined with the main result of
\cite{Du1}, an extended version of Cobham's theorem follows.

\section{Words, morphisms, substitutions and numeration systems}

The aim of this section is just to recall classical definitions and
notation.

\subsection{Words and sequences} 
An {\it alphabet} $A$ is a finite set of elements called {\it
  letters}. A {\it word} over $A$ is an element of the free monoid
generated by $A$, denoted by $A^*$. Let $x = x_0x_1 \cdots x_{n-1}$
(with $x_i\in A$, $0\leq i\leq n-1$) be a word, its {\it length} is
$n$ and is denoted by $|x|$. The number of occurrences of a letter
$a\in A$ in the word $w$ is denoted $|w|_a$ and if $E$ is a subset of
$A$, then $|w|_E$ is a shorthand for $\sum_{e\in E}|w|_e$. The {\it
  empty word} is denoted by $\epsilon$, $|\epsilon| = 0$. The set of
non-empty words over $A$ is denoted by $A^+$. The elements of
$A^{\NN}$ are called {\it sequences}. If $x=x_0x_1\cdots$ is a
sequence (with $x_i\in A$, $i\in \NN$) and $I=[k,l]$ an interval of
$\NN$ we set $x_I = x_k x_{k+1}\cdots x_{l}$ and we say that $x_{I}$
is a {\it factor} of $x$.  If $k = 0$, we say that $x_{I}$ is a {\it
  prefix} of $x$. The set of factors of length $n$ of $x$ is written
$L_n(x)$ and the set of factors of $x$, or the {\it language} of $x$,
is noted $L(x)$. The {\it occurrences} in $x$ of a word $u$ are the
integers $i$ such that $x_{[i,i + |u| - 1]}= u$. When $x$ is a word,
we use the same terminology with similar definitions.

The sequence $x$ is {\it ultimately periodic} if there exist a word
$u$ and a non-empty word $v$ such that $x=uv^{\omega}$, where
$v^{\omega}= vvv\cdots $. Otherwise we say that $x$ is {\it
  non-periodic}. It is {\it periodic} if $u$ is the empty word. A
sequence $x$ is {\it uniformly recurrent} if every factor of $x$
appears infinitely often in $x$ and for each factor $u$ the greatest
difference of two successive occurrences of $u$ is bounded.

\subsection{Morphisms and matrices} 
Let $A$ and $B$ be two alphabets. A {\it morphism} $\tau$ is a map
from $A$ to $B^*$. Such a map induces by concatenation a morphism from
$A^*$ to $B^*$. If $\tau (A)$ is included in $B^+$, it induces a map
from $A^{\NN}$ to $B^{\NN}$. These two maps are also called $\tau$.
With the morphism $\tau$ is naturally associated the matrix $M_{\tau} =
(m_{i,j})_{i\in B , j \in A }$ where $m_{i,j}$ is the number of
occurrences of $i$ in the word $\tau(j)$.

Let $M$ be a square matrix, we call {\it dominant eigenvalue of $M$}
an eigenvalue $r$ such that the modulus of all the other eigenvalues
do not exceed the modulus of $r$. A square matrix is called {\it
  primitive} if it has a power with positive coefficients. In this
case the dominant eigenvalue is unique, positive and it is a simple
root of the characteristic polynomial. This is Perron-Frobenius Theorem (see
for instance \cite{LindMarcus}).

\subsection{Substitutions and substitutive sequences}

A {\it substitution} is a morphism $\tau : A \rightarrow
A^{*}$. 
In all this paper, and without exception, a substitution $\tau$ is assumed 
to fulfill the following hypothesis : 
There exists a letter $a\in A$ with

\begin{enumerate}
\item
$\lim_{n\rightarrow +\infty} |\tau^n (a)| = +\infty$ and 
\item
$\tau (a) = au$ for some $u\in A^*$.  
\end{enumerate}

Whenever the matrix associated to $\tau $ is
primitive we say that $\tau$ is a {\it primitive substitution}.  We
say $\tau$ is a {\it growing} substitution if $\lim_{n\rightarrow
  +\infty} |\tau^n (b)| = +\infty$ for all $b\in A$. We say $\tau$ is
{\it erasing} if there exists $b\in A$ such that $\tau (b)$ is the
empty word.  A fixed point of $\tau $ is a sequence $x=(x_n ; n\in \NN
)$ such that $\tau (x) = x$. We say it is a {\it proper fixed point}
if all letters of $A$ have an occurrence in $x$. We observe that all
proper fixed points of $\tau$ have the same language. Notice that each
substitution has at least one proper fixed point.  Let $x$ be a proper
fixed point of $\tau$. We define
$$
L(\tau )
=
\left\{
x_{[i,j]} ; i,j\in \NN , i\leq j 
\right\} .
$$

\begin{exam}
The substitution $\tau $ defined by $\tau (a)=aaab$,
$\tau (b) = bc$ and $\tau (c) = b$ has two fixed points, one is
starting with the letter $a$ and is proper and the other one is
starting with the letter $b$ and is not proper.
\end{exam}

Let $B$ be another alphabet and $y\in B^\NN$. Let $\S$ be a set of
substitutions. We say that $y$ is {\it substitutive in} $\S$ if $y =
\phi (x)$ where $x\in A^\NN$ is a proper fixed point of $\tau \in \S$
and $\phi : A\to B^*$ is a {\it letter-to-letter morphism}, i.e.,
$\phi (A)$ is a subset of $B$.

\subsection{Automata}
We assume that the reader has some basic knowledge in automata theory,
see for instance \cite{Ei}.  A {\it deterministic finite automaton}
over $A$ or simply a DFA is a $5$-tuple
$\mathcal{M}=(Q,q_0,F,A,\delta)$ where $Q$ is the finite set of
states, $q_0\in Q$ is the initial state, $F\subseteq Q$ is the set of
final states and $\delta:Q\times A\to A$ is the (partial) transition
function. A DFA is {\it complete} if $\delta$ is a total function. As
usual, $\delta$ can be naturally extended to $Q\times A^*$. With the DFA
$\mathcal{M}$ is associated the matrix
$M_{\mathcal{M}}=(m_{i,j})_{i,j\in Q}$ where $m_{i,j}=\#\{a\in A;
\delta(j,q)=i\}$.

If $L$ is a regular language then the trim minimal automaton of $L$ is
said to be the {\it canonical automaton} of $L$. Recall that an
automaton is {\it trim} (or reduced) if it accessible and
coaccessible, i.e., every state is reachable from $q_0$ and every
state reaches a final state. 
Let
$\mathcal{M}=(Q,q_0,F,A,\delta)$ be a DFA and $L\subseteq A^*$ be a
regular language with $\mathcal{A}=(Q',q_0',F',A,\delta')$ as
canonical automaton. Then $\mathcal{M}$ is said to be an {\it
  $L$-automaton} if there exists an onto mapping $\Phi:Q\to Q'$ such
that
\begin{enumerate}
 \item $\Phi(q_0)=q_0'$,
 \item $\Phi(F)\subseteq F'$,
 \item $\forall q\in Q$, $\forall a\in A$:
 $\Phi(\delta(q,a))=\delta'(\Phi(q),a)$.
\end{enumerate}
In the latter condition, if $\delta(q,a)$ is not defined then
$\delta'(\Phi(q),a)$ is not defined, and conversely.
Notice that this kind of definition can also be found in \cite{BH}
where linear numeration systems related to a Pisot number are
investigated.

\begin{rema} Changing the set of final states in an $L$-automaton
  allows this automaton to recognize exactly the language $L$. With
  the same notation as before, it suffices to take $\Phi^{-1}(F')$ as
  set of final states for the $L$-automaton.
\end{rema}

\subsection{Abstract numeration systems}

If the alphabet $A$ is totally ordered then we can enumerate the words of
$A^*$ by the {\it genealogical ordering} defined as follows. Let $x,y$
be two words over $A$, we say $x<y$ if $|x|<|y|$ or if $|x|=|y|$ and
there exist $a,b\in A$, $u,x',y'\in A^*$ such that $a<b$, $x=uax'$ and
$y=uby'$. Enumerating the words of an infinite regular language $L$
over a totally ordered alphabet $(A,<)$ by increasing genealogical
order gives a one-to-one correspondence between $\NN$ and $L$ (see
\cite{LR}).  We say that the $(n+1)$th word $w$ in the genealogically
ordered language $L$ is the representation of $n$ in the {\it abstract
  numeration system} $S=(L,A,<)$ and we write $\rep_S(n)=w$. In
particular, if $E$ is a subset of $\NN$ then $\rep_S(E)$ is a subset
of $L$. We say that $E$ is {\it $S$-recognizable} if $\rep_S(E)$ is a
regular language.  The {\it characteristic sequence} of $E$ is the
sequence $\chi_E=x_0x_1\cdots \in \{0,1\}^\NN$ such that $x_i=1$ if
and only if $i$ belongs to $E$.

\begin{exam}
  Let $A=\{0,\ldots,k-1\}$ for some $k\ge 2$. The language
  $$L=\{\epsilon\}\cup\{1,\ldots,k-1\}\{0,\ldots,k-1\}^*$$ genealogically
  ordered with the usual ordering of the digits gives the classical
  $k$-ary system.  Let $B=\{0,1\}$. Enumerating the words of
  $M=\{\epsilon\}\cup 1 \{0,01\}^*$ gives exactly the Fibonacci
  system. These two examples are special cases of linear numeration
  systems whose characteristic polynomial is the minimal polynomial of
  a Pisot number (in this setting, it is well known that the language
  of the numeration is regular \cite{BH}). All the systems of this
  kind are therefore special cases of abstract numeration systems.
\end{exam}

\begin{exam}\label{exaab}
  Let us consider an abstract numeration system which is no more
  positional (i.e., not built on a strictly increasing sequence of
  integers). Let $A=\{a,b\}$ with $a<b$.  The first words of
  $L=a^*b^*$ enumerated by genealogical order are
$$\epsilon,a,b,aa,ab,bb,aaa,aab,abb,bbb,aaaa\ldots$$
For instance, $\rep_S(5)=bb$ and
$\rep_S^{-1}(a^*)=\{0,1,3,6,10,\ldots\}=E_a$ is an $S$-recognizable
subset of $\NN$ (formed of triangular numbers). For such a system,
$\rep_S^{-1}(a^pb^q)=\frac12(p+q)(p+q+1)+q$ and we cannot mimic
positional systems where one can define ``weight'' to the ``digits''
$a$ and $b$. Moreover we can already notice that $\#(L\cap A^n)=n+1$
has a polynomial behavior (contrasting with systems built on Pisot
numbers which always have an exponential behavior).
\end{exam}

\section{The link between substitutions and numeration systems.}\label{sec3}

In this section, we associate a substitution $\sigma$ to any
$S$-recognizable set $E$ of integers for a given abstract numeration
system $S$. One of the fixed point $z$ of $\sigma$ is such that
$f(z)=\chi_E$ for some (possibly erasing) morphism $f$.

\begin{lemma}\label{lemma:Srec} 
  Let $S=(L,A,<)$ be a numeration system.  A set $E\subset\mathbb{N}$
  is $S$-recognizable if and only if $\rep_S(E)$ is accepted by an
  $L$-automaton.
\end{lemma}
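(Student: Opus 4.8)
The statement is an equivalence, so I would prove the two implications separately. The easy direction is that if $\rep_S(E)$ is accepted by an $L$-automaton $\mathcal{M}=(Q,q_0,F,A,\delta)$, then $E$ is $S$-recognizable. Here I just need to exhibit a finite automaton recognizing the regular language $\rep_S(E)$; but an $L$-automaton is by definition a DFA, and changing its set of final states does not affect finiteness, so $\rep_S(E)$ is recognized by a DFA and hence regular. That is precisely what $S$-recognizability of $E$ means, so this direction is essentially immediate from the definitions (and the remark following the definition of $L$-automaton).

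For the converse, suppose $E$ is $S$-recognizable, i.e., $\rep_S(E)$ is a regular language. Let $\mathcal{B}$ be the canonical automaton of $\rep_S(E)$ and let $\mathcal{A}=(Q',q_0',F',A,\delta')$ be the canonical automaton of $L$. The plan is to build a single DFA running $\mathcal{B}$ and $\mathcal{A}$ in parallel, i.e., a product automaton $\mathcal{M}$ whose states are pairs $(p,q)$ with $p$ a state of $\mathcal{B}$ and $q$ a state of $\mathcal{A}$, with transition $\delta((p,q),a)=(\delta_{\mathcal{B}}(p,a),\delta'(q,a))$, initial state the pair of initial states, and final states the pairs whose first component is final in $\mathcal{B}$ and whose second component is final in $\mathcal{A}$ (so that $\mathcal{M}$ accepts $\rep_S(E)\cap L=\rep_S(E)$, using $\rep_S(E)\subseteq L$). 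Then I would trim $\mathcal{M}$, keeping only accessible and coaccessible states, so that every state still lies on some accepting path; since every accepting path of $\mathcal{M}$ projects onto an accepting path of $\mathcal{A}$, the second-component projection $\Phi(p,q)=q$ is well defined on the trimmed automaton. It remains to check that $\Phi$ has the three properties required of an $L$-automaton: $\Phi$ sends the initial state to $q_0'$ and final states into $F'$ by construction, and $\Phi(\delta((p,q),a))=\delta'(\Phi(p,q),a)$ holds by the definition of the product transition, with the domain condition ($\delta$ defined iff $\delta'\circ\Phi$ defined) handled by the trimming. Surjectivity of $\Phi$ onto $Q'$ is the one point needing a short argument: every state $q$ of $\mathcal{A}$, being accessible and coaccessible, lies on an accepting path of $\mathcal{A}$ reading some word $w\in L$; running $\mathcal{M}$ on $w$ and picking the state reached after the relevant prefix gives a preimage of $q$, and this state survives trimming precisely because $w$ witnesses its coaccessibility (after possibly replacing $w$ by a word in $\rep_S(E)$ through the same state — here one uses that from the $\mathcal{A}$-state $q$ one can always continue to a word of $\rep_S(E)$, adjusting the $\mathcal{B}$-component, since $\rep_S(E)$ is infinite or, if finite, can be handled directly).

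The main obstacle is the surjectivity of $\Phi$ together with the delicate bookkeeping of partial transition functions: one must ensure that trimming the product automaton does not destroy states needed to cover all of $Q'$, and that the ``conversely'' clause in the $L$-automaton definition (if $\delta'(\Phi(q),a)$ is undefined then $\delta(q,a)$ is undefined) is respected — this is automatic since the product transition is defined only when both components are. I expect the write-up to be short once the product construction and the choice of final states are fixed; the conceptual content is simply that an $L$-automaton is nothing more than a (trim) automaton accepting a subset of $L$ that refines, via a projection compatible with transitions, the canonical automaton of $L$, and the product with $\mathcal{A}$ manufactures exactly such a refinement.
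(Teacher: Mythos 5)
Your easy direction is fine, but the hard direction has a genuine gap, and it is located exactly where you flagged "delicate bookkeeping". Two of the $L$-automaton requirements are violated by your construction. First, the definedness clause is two-sided: the definition demands that $\delta(q,a)$ be undefined \emph{if and only if} $\delta'(\Phi(q),a)$ is undefined. In your product of the canonical (hence partial, trim) automaton $\mathcal{B}$ of $\rep_S(E)$ with $\mathcal{A}$, a transition can disappear because the $\mathcal{B}$-component is undefined (or because trimming removed the target), even though $\delta'(\Phi(q),a)$ is defined; so the "conversely" half fails, and trimming makes this worse rather than handling it. Second, surjectivity of $\Phi$ genuinely fails after trimming: the patch you propose -- that from any state $q'$ of $\mathcal{A}$ one can continue to a word of $\rep_S(E)$ -- is false in general. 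Take $L=a^*b^*$ and $E=E_a$, so $\rep_S(E)=a^*$: no word of $\rep_S(E)$ ever visits the state $B$ of the canonical automaton of $L$, so the trimmed product has no state mapping onto $B$ (the same problem occurs whenever $E$ is finite). Infinitude of $\rep_S(E)$ does not help, since it may avoid whole regions of $\mathcal{A}$.

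The paper sidesteps both problems at once by choosing the automaton for $\rep_S(E)$ to be a \emph{complete} DFA $\mathcal{M}$ and by \emph{not} trimming: the $L$-automaton is the full product $\mathcal{P}=(Q'\times Q,(q_0',q_0),F'\times F,A,\mu)$, where $\mu((q,r),a)$ is defined exactly when $\delta'(q,a)$ is, and $\Phi$ is the projection onto the first (canonical-automaton) component. Completeness of $\mathcal{M}$ makes partiality come only from $\delta'$, so the two-sided definedness condition holds automatically, and since the state set is all of $Q'\times Q$, surjectivity of $\Phi$ is trivial; the product still accepts $L\cap\rep_S(E)=\rep_S(E)$. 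So the repair of your argument is not an extra lemma about reaching $\rep_S(E)$ from every state of $\mathcal{A}$ (which is unavailable), but replacing "canonical automaton of $\rep_S(E)$, then trim" by "complete DFA for $\rep_S(E)$, no trimming" -- note also that, for this, $\Phi$ should project onto the $\mathcal{A}$-component, not be constrained by which pairs survive an accessibility/coaccessibility pruning.
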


\begin{proof} Assume that $E$ is $S$-recognizable. So there exists a
  complete and accessible DFA $\mathcal{M}=(Q,q_0,F,A,\delta)$
  accepting exactly $\rep_S(E)$. We denote by
  $\mathcal{A}=(Q',q_0',F',A,\delta')$ the canonical automaton of $L$.
  Consider the ``product'' automaton
 $$\mathcal{P}=(Q'\times Q, (q_0',q_0),
 F'\times F, A, \mu)$$ where the transition function
 $\mu$ is defined, for all $(q,r)\in Q'\times Q$ and all
 $a\in A$ such that $\delta'(q,a)$ exists, by
 $$\mu((q,r),a)=(\delta'(q,a),\delta(r,a)).$$
 Clearly, $\mathcal{P}$
 is an $L$-automaton accepting $\rep_S(E)$. It suffices to consider the
 application $\Phi:Q'\times Q\to Q'$ mapping $(q,r)$ onto $q$.
\end{proof}

\begin{defi}\label{defisigmam}
Let $A=\{a_1<\cdots <a_k\}$ be a totally ordered alphabet. To any DFA
$\mathcal{M}=(Q,q_0,F,A,\delta)$, if $s\not\in Q$ then one can associate
a substitution $\sigma_\mathcal{M}:Q\cup \{s\}\to
(Q\cup\{s\})^*$ defined by 
$$\sigma_\mathcal{M}:\left\{\begin{array}{ll}
s & \mapsto s\, q_0\cr 
q & \mapsto \delta(q,a_1)\cdots \delta(q,a_k),\ \forall q\in Q \cr
\end{array}\right.
$$
where in the last expression, if $\delta(q,a)$ is not defined for
some $a$, then it is replaced by $\epsilon$. Observe that
$\sigma_\mathcal{M}$ can be erasing. This kind of substitution was
introduced for instance in \cite{RM}. The substitution associated to
the canonical automaton of $L$ is said to be the {\it canonical
  substitution} of $L$ and is denoted $\sigma_L$.
\end{defi}

Let $L$ be a regular language and $\sigma_L:B\to B^*$ be its canonical
substitution and let $\tau:A\to A^*$ be a substitution. If there
exists an onto mapping $\Phi:A\to B$ such that for all $a\in A$,
$$\Phi(\tau(a))=\sigma_L(\Phi(a))$$
then $\tau$ is said to be an {\it
  $L$-substitution}. Clearly, if $\mathcal{M}$ is an $L$-automaton then
$\sigma_\mathcal{M}$ is an $L$-substitution.

\begin{prop}\label{prop:numsub}
  Let $S=(L,A,<)$ be a numeration system and $E\subset\mathbb{N}$ be an
  $S$-recognizable set. Then there exists an $L$-substitution
  $\sigma:B\to B^*$ having $x\in B^\omega$ as fixed point
  and a morphism $f:B\to \{0,1\}\cup\{\epsilon\}$ such that
  $$f(x)=\chi_E.$$
\end{prop}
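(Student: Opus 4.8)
The plan is to apply the construction of Definition~\ref{defisigmam} to the $L$-automaton furnished by Lemma~\ref{lemma:Srec}. Since $E$ is $S$-recognizable, that lemma provides an $L$-automaton $\mathcal{M}=(Q,q_0,F,A,\delta)$ accepting exactly $\rep_S(E)$, together with an onto map $\Phi:Q\to Q'$ onto the state set of the canonical automaton $\mathcal{A}=(Q',q_0',F',A,\delta')$ of $L$, satisfying conditions (1)--(3) of the definition of an $L$-automaton. Put $B=Q\cup\{s\}$ and $\sigma=\sigma_{\mathcal{M}}$; since $\mathcal{M}$ is an $L$-automaton, $\sigma$ is an $L$-substitution (via $\Phi$ extended by $\Phi(s)=s$). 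As $\sigma(s)=s\,q_0$ with $q_0\ne\epsilon$, and as $|\sigma^n(s)|\ge n+1$ — because $L$ is infinite, hence contains words of every length, hence prefixes of every length, so each $\sigma^i(q_0)$ is non-empty — the two hypotheses imposed on substitutions hold with the letter $s$, and $x=\lim_n\sigma^n(s)=s\,q_0\,\sigma(q_0)\,\sigma^{2}(q_0)\cdots$ is a fixed point of $\sigma$.

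I would then isolate the combinatorial heart of the matter (the folklore behind such substitutions, see~\cite{RM}): for every $n\ge 0$, reading $\sigma^{n}(q_0)$ from left to right lists the states $\delta(q_0,w)$ as $w$ ranges over the length-$n$ prefixes of words of $L$, in increasing genealogical order — with the convention that an undefined transition contributes the empty word. This is proved by induction on $n$, the inductive step unwinding the definition of $\sigma_{\mathcal{M}}$ and using that, because $\mathcal{M}$ is an $L$-automaton and $\mathcal{A}$ is trim, $\delta(q_0,w)$ is defined precisely when $w$ is a prefix of some word of $L$. Concatenating over $n$, the letters of $x$ following the initial $s$ are exactly the states $\delta(q_0,w)$ with $w$ running over all prefixes of words of $L$ in genealogical order.

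Finally I would define the morphism $f:B\to\{0,1\}\cup\{\epsilon\}$ by $f(s)=\epsilon$ and, for $q\in Q$, by $f(q)=\epsilon$ if $\Phi(q)\notin F'$, $f(q)=1$ if $q\in F$, and $f(q)=0$ otherwise. The point requiring care — and which I expect to be the only real obstacle — is that this prescription is consistent, i.e.\ the relevant information depends only on the state reached and not on the word used to reach it. Indeed, if $q=\delta(q_0,w)$ then $w\in L\iff\delta'(q_0',w)=\Phi(q)\in F'$, so the states with $\Phi(q)\notin F'$ are exactly those reached only by proper prefixes (never by a word of $L$), and these are erased by $f$; and when $\Phi(q)\in F'$, determinism of $\mathcal{M}$ gives $w\in\rep_S(E)\iff\delta(q_0,w)=q\in F$, so $f(q)$ records correctly whether $\rep_S^{-1}(w)\in E$. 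Applying $f$ to $x$ thus deletes $s$ together with all states reached only by proper prefixes, and leaves, in genealogical order, one symbol per word $w\in L$, equal to $1$ if $\rep_S^{-1}(w)\in E$ and to $0$ otherwise; that is, $f(x)=\chi_E$. (If a proper fixed point is wanted for later use, one restricts $\sigma$ and $\Phi$ to the letters actually occurring in $x$, which is harmless since $\mathcal{A}$ is trim, so every state of $\mathcal{A}$ is the $\Phi$-image of some state occurring in $x$.)
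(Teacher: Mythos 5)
Your proposal is correct and takes essentially the same route as the paper: invoke Lemma~\ref{lemma:Srec} to get an $L$-automaton $\mathcal{M}$ accepting $\rep_S(E)$, take $\sigma=\sigma_{\mathcal{M}}$ from Definition~\ref{defisigmam}, and use exactly the paper's morphism $f$ (erasing $s$ and the states $q$ with $\Phi(q)\notin F'$, sending $F$ to $1$ and $\Phi^{-1}(F')\setminus F$ to $0$); the paper simply leaves as ``easy to show'' the verification that $\lim_n f(\sigma_{\mathcal{M}}^n(s))=\chi_E$, which you spell out via the induction on $\sigma^n(q_0)$ enumerating $\delta(q_0,w)$ over genealogically ordered prefixes. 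One cosmetic slip: an infinite $L$ need not contain words of \emph{every} length, but it does contain arbitrarily long words, which already yields prefixes of every length, so your growth claim for $s$ (and hence the existence of the fixed point) stands.
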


\begin{proof} 
  We denote by $\mathcal{A}=(Q',q_0',F',A,\delta')$ the canonical
  automaton of $L$. By Lemma \ref{lemma:Srec}, $\rep_S(E)$ is accepted
  by some $L$-automaton $\mathcal{M}=(Q,q_0,F,A,\delta)$. Let
  $\Phi:Q\to Q'$ be the mapping related to the $L$-automaton and let
  $F''$ be the set of states of $\mathcal{M}$ given by
  $$F''=\Phi^{-1}(F').$$
  Observe that $F\subseteq F''\subseteq Q$.
  Consider the alphabet $B=Q\cup\{s\}$ ($s\not\in Q$), the
  $L$-substitution $\sigma_\mathcal{M}:B\to B^*$ and the mapping
  $f:B\to \{0,1\}\cup\{\epsilon\}$ defined by
  $$f : \left\{
 \begin{array}{ll} 
s & \mapsto \epsilon; \cr 
q & \mapsto 1, \ {\rm  if}\ q \in F ;\cr 
q & \mapsto 0, \ {\rm if}\ q \in F''\setminus F ;\cr 
q & \mapsto \epsilon,\ {\rm if}\ q \in Q\setminus F''.\cr
\end{array}
\right.$$
It is easy to show that
$\lim_{n\to\infty}f(\sigma_\mathcal{M}^n(s))=\chi_E$.
\end{proof}

\begin{exam}
  We continue Example \ref{exaab}. The canonical automaton of
  $L=a^*b^*$ has two states $A$ and $B$ such that $\delta(A,a)=A$,
  $\delta(A,b)=B$ and $\delta(B,b)=b$. Proceeding as in Definition
  \ref{defisigmam}, we get the substitution
$$\sigma_{\mathcal{M}}:s\mapsto sA,\ A\mapsto AB,\ B\mapsto B$$
having
$$w=sAABABBABBBABBBBABBBBB\cdots$$
as fixed point. Applying the morphism $f:s\mapsto\epsilon, A\mapsto
1,B\mapsto 0$ to this word $w$, we get the characteristic sequence of
the $S$-recognizable set $E_a=\{0,1,3,6,10,\ldots\}$.
\end{exam}

\section{Growth type and erasures}\label{sec4}

\label{preliminary}

In this section, we first consider the growth order of the length of
the iterates of a substitution for any letter. From this we define the
notion of growth order of a letter. Then we give arguments that allow
us to get rid of erasing substitutions. In the third part of this
section, we exhibit sub-alphabets which are invariant for the
substitution. All of these results will play an important r\^ole in
the proof of our main result.

Finally, we consider the relationship of the growth order of the
substitution with abstract numeration systems. This will lead to an
easy adaptation of Cobham's theorem given in terms of substitutions to
these abstract numeration systems.

\subsection{Growth type}\label{sec:growth}

In this subsection we recall some lemmata and definitions appearing in
\cite{Du1}.

Notice that in the following lemma, the substitutions $\sigma$ and
$\tau$ can be erasing. As we will see in the detailed proof of the
result, the technical procedure of replacing $\tau$ with one of its
power, allows us to get rid of irreducible components to the benefit of
irreducible ones.
\begin{lemma}
\label{croissance}
Let $\tau:A\to A^*$ be a substitution on the finite alphabet $A$.
There exists $p$ such that for $\sigma=\tau^ p$ and for all $a\in A$,
one of the following two situations occurs, either
    $$\exists N\in\NN : \forall n>N,\ |\sigma^n(a)|=0,$$
    or there
    exist $d(a)\in\NN$ and algebraic numbers $c(a),\alpha(a)$ such that
    $$
    \lim_{n\to+\infty} \frac{|\sigma^n(a)|}{c(a)\, n^{d(a)}\,
      \alpha(a)^n}=1.$$
    Moreover, if the latter situation occurs then
    for all $i\in\{0,\ldots,d(a)\}$ there exists a letter $b\in A$
    appearing in $\sigma^j(a)$ for some $j\in\NN$ and such that
    $$\lim_{n\to+\infty} \frac{|\sigma^n(b)|}{c(b)\, n^i\,
      \alpha(a)^n}=1.$$
\end{lemma}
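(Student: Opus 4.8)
The plan is to pass to a suitable power of $\tau$ so that the combinatorial structure of the associated matrix becomes tractable, and then to read off the asymptotics of $|\sigma^n(a)|$ from the Jordan form of that matrix.

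\textbf{Step 1: Reducing to a convenient power.} Let $M = M_\tau$ be the incidence matrix of $\tau$ on $A$. Consider the directed graph $G$ on vertex set $A$ with an edge $a\to b$ whenever $b$ occurs in $\tau(a)$; its strongly connected components (the irreducible components of $M$) are partially ordered by reachability. For each component $C$, the restriction of $M$ to $C$ is either the zero matrix (a single letter $b$ with $\tau(b)\in (A\setminus\{b\})^*$, or an erased letter) or an irreducible nonnegative matrix, and in the latter case Perron--Frobenius gives a dominant eigenvalue $\alpha_C>0$ which is a simple root of the characteristic polynomial of $M|_C$. The one subtlety is that an irreducible matrix may have several eigenvalues of maximal modulus (when it is not primitive); replacing $\tau$ by $\tau^p$ replaces each $M|_C$ by $(M|_C)^p$, and choosing $p$ to be a common multiple of the periods of all the irreducible components makes every nonzero irreducible block primitive. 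So fix such a $p$, set $\sigma=\tau^p$ and $N_0 = M^p = M_\sigma$; from now on every irreducible component of $N_0$ is either zero or primitive with a unique simple dominant eigenvalue $\alpha_C > 0$.

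\textbf{Step 2: Asymptotics from the component structure.} For a letter $a$, the quantity $|\sigma^n(a)| = \sum_{b\in A}(N_0^n)_{b,a} = \mathbf{1}^{t} N_0^n e_a$ depends only on the components reachable from $a$ in $G$. If every such component is a zero block, then $|\sigma^n(a)|=0$ for $n$ large, giving the first alternative. Otherwise, among the components reachable from $a$, let $\alpha(a)$ be the largest Perron value occurring (the spectral radius of the relevant submatrix), and let $d(a)$ be the largest $d$ such that there is a chain of $d+1$ distinct components $C_0\to C_1\to\cdots\to C_d$, reachable from $a$, each having Perron value exactly $\alpha(a)$. A standard analysis of powers of a block-triangular nonnegative matrix — equivalently, of the Jordan blocks of $N_0$ for the eigenvalue $\alpha(a)$, whose sizes are governed precisely by the lengths of such chains of maximal-value components — yields that $(N_0^n)_{b,a}$ is asymptotic to a constant times $n^{d(a)}\alpha(a)^n$, with the constant being a nonnegative algebraic number because all data (entries of $N_0$, the eigenvalue $\alpha(a)$) are algebraic; summing over $b$ and checking the constant is strictly positive (it is, since the chain realizing $d(a)$ contributes a strictly positive coefficient to the leading term) gives
$$\lim_{n\to+\infty}\frac{|\sigma^n(a)|}{c(a)\,n^{d(a)}\,\alpha(a)^n}=1$$
for suitable algebraic $c(a)>0$, $\alpha(a)>0$, $d(a)\in\NN$.

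\textbf{Step 3: The intermediate exponents.} For the last assertion, fix $i\in\{0,\ldots,d(a)\}$. Take a maximal chain $C_0\to\cdots\to C_{d(a)}$ of distinct components reachable from $a$, all with Perron value $\alpha(a)$. Pick any letter $b\in C_i$; then $b$ appears in $\sigma^j(a)$ for some $j$ (because $C_i$ is reachable from $a$). The components reachable from $b$ include $C_i\to C_{i+1}\to\cdots\to C_{d(a)}$, a chain of $i+1$ ... wait, of $d(a)-i+1$ components; to get exponent exactly $i$ one should instead index the chain so that $b$ sits at the position leaving exactly $i+1$ maximal-value components above it. Concretely: truncate the chain to $C_0\to\cdots\to C_i$ and choose $b\in C_i$ — but one must ensure no longer chain of $\alpha(a)$-components issues from $b$. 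This is where a little care is needed: it can happen that from $C_i$ one can also reach, besides $C_{i+1},\ldots,C_{d(a)}$, some other $\alpha(a)$-component, so the maximal chain from $b$ is longer than $i+1$. The fix is to choose the chain $C_0\to\cdots\to C_{d(a)}$ to be a longest one (which has length $d(a)$ by definition of $d(a)$) and to take $b$ in the component sitting $i$ steps below the top, i.e. $b\in C_{d(a)-i}$; then the maximal chain of $\alpha(a)$-components issuing from $b$ has length exactly $i$, so Step 2 applied to $b$ gives $|\sigma^n(b)|\sim c(b)\,n^{i}\,\alpha(a)^n$. Relabelling $i\mapsto d(a)-i$ (which ranges over the same set $\{0,\ldots,d(a)\}$) yields the statement as written.

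\textbf{Main obstacle.} The genuinely delicate point is Step 3 — matching the polynomial exponent $i$ to an actual letter — and, within Step 2, the bookkeeping that identifies the Jordan block sizes of $N_0$ at the dominant eigenvalue with the lengths of chains of maximal-modulus irreducible components of a block-triangular nonnegative matrix, together with the verification that the resulting leading coefficients $c(\cdot)$ are \emph{strictly positive} algebraic numbers (positivity is what makes the limit equal to $1$ rather than merely $O(1)$). The reduction to a power in Step 1 and the algebraicity of all constants are comparatively routine. Since the statement is quoted from \cite{Du1}, the cleanest exposition is to recall the component/chain picture and cite the detailed argument there.
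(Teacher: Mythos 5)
Your proposal follows essentially the same route as the paper's proof: pass to a power $\tau^p$ ($p$ the lcm of the periods) so that every nonzero communicating class of the incidence matrix becomes primitive, read the asymptotics of $|\sigma^n(a)|$ off the block-triangular structure via chains of classes of maximal Perron value reachable from $a$, and obtain the intermediate exponents by locating letters at the right depth along such a chain. Your fix in Step 3 (taking $b$ in the class sitting $i$ steps below the top of a \emph{longest} chain, so no longer chain can issue from $b$) is correct and in fact makes explicit a point that the paper's own proof treats only tersely.
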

\begin{proof}
    With $\sigma$ we associate an automaton $\mathcal{A}_\sigma$ in
    the classical way: the set of states of $\mathcal{A}_\sigma$ is
    $A$, the alphabet is $\{1,\ldots,\max_{a\in A}|\sigma(a)|\}$ and
    the transition function $\delta$ is defined as follows. If $b$
    appears in $\sigma(a)$ at position $i\ge 1$ then $\delta(a,i)=b$.
    Notice that $\delta(a,k)$ is not defined if $k>|\sigma(a)|$. So
    $\mathcal{A}_\sigma$ is possibly not a complete automaton. From
    the definition of $\mathcal{A}_\sigma$, it follows that
    $|\sigma^n(a)|$ is exactly the number of paths of length $n$ in
    $\mathcal{A}_\sigma$ starting from $a$.

    We write $a\to b$ if there exists a path in $\mathcal{A}_\sigma$
    from $a$ to $b$. We define an equivalence relation $\sim$ over $A$
    as follows. We define for all $a,b\in A$,
    $$a\sim b\Leftrightarrow (a=b) \text{ or }(a\to b \text{ and }b\to
    a).$$
    As usual, an equivalence class for $\sim$ is said to be a
    {\em communicating class}. Proceeding as in \cite[p.
    119]{LindMarcus}, the communicating classes and the corresponding
    states of $\mathcal{A}_\sigma$ can be ordered in such a way that
    the matrix associated with $\sigma$ has a block triangular form
    \begin{equation}
        \label{eq:triang}
M_\sigma=
\begin{pmatrix}
M_1&0&0&\ldots&0\\
*&M_2&0&\ldots&0\\
*&*&M_3&\ldots&0\\
\vdots&\vdots&\vdots&\ddots&\vdots\\
*&*&*&\ldots&M_k
\end{pmatrix}.
\end{equation}
We denote by $C_j$ the communicating class related to $M_j$. Each
$M_j\neq 0$ is irreducible. 
Let $p_j$ be the corresponding period
(i.e., the smallest integer $t$ such that $(M_j)^t$ has positive
entries on the main diagonal). Let $p={\rm lcm}_{j=1,\ldots,k}\, p_j$.
Replacing $\sigma$ with $\sigma^p$ does not affect its fixed point.
The communicating classes $C_j$ related to primitive blocks $M_j$ are
the same in $\mathcal{A}_\sigma$ and $\mathcal{A}_{\sigma^p}$ but each
communicating class in $\mathcal{A}_\sigma$ related to a nonzero block
which is not primitive is split into several communicating classes in
$\mathcal{A}_{\sigma^p}$ related to primitive blocks (see for instance
\cite[Section 4.5]{LindMarcus}). Assuming that $\sigma$ has been
replaced by $\sigma^p$ (this has no consequence for the rest of this
paper because we are mainly interested in the fixed points of $\sigma$,
so we may assume that the substitutions we consider have such a
property), we may assume in what follows that each $M_j$'s appearing
in \eqref{eq:triang} is either primitive or zero.  Let $\alpha_j$ be
the Perron-Frobenius eigenvalue associated with $M_j\neq 0$. If
$M_j=0$, we set $\alpha_j=0$. One can already notice that $\alpha_j$
is algebraic since $M_j$ has only integer entries.  Notice also that
$\alpha_j=1\Leftrightarrow M_j=(1)$. The number of words of length $n$
starting from and ending to a state related to $M_j$ is of the form
$\sim c_j\, \alpha_j^n$. Since $c_j$ can be computed from left and
right Perron eigenvectors of $M_j$ (see \cite[Thm
4.5.12]{LindMarcus}), it is clear that $c_j$ is an algebraic number
(computations take place in $\mathbb{Q}(\alpha_j)$).

We now estimate the number $|\sigma^n(a)|$ of paths of length $n$ in
$\mathcal{A}_\sigma$ starting from a given state $a$ belonging to
$C_k$. In the graph of the communicating classes (we use once again
the terminology of \cite[p.  119]{LindMarcus}), consider the set
$\mathcal{P}_k$ of all paths starting in $C_k$ and ending in a leaf. Let
$C_{k,0}=C_k,C_{k,1},\ldots,C_{k,\ell}$ be such a path $\mathfrak{p}$
(we will only consider classes such that $M_{k,i}\neq 0$, if no such a
class exists then the corresponding number of words of length $n$ is
zero for $n$ large enough).  The contribution of $\mathfrak{p}$ to
$|\sigma^n(a)|$ is
$$\sim c_{k,0}\ldots c_{k,\ell}\sum_{n_0+\cdots+n_\ell=n}
\alpha_{k,0}^{n_0}\cdots \alpha_{k,\ell}^{n_\ell}$$

Let $$\beta=\max_{i=0,\ldots,\ell}\alpha_{k,i}$$
and
$C_{k,j_1},\ldots,C_{k,j_t}$ be the communicating classes having
$\beta$ as Perron-Frobenius eigenvalue, $t\ge 1$. Therefore the
contribution of $\mathfrak{p}$ to $|\sigma^n(a)|$ is $$\sim
c_{k,0}\ldots c_{k,\ell}\, n^{t-1}\, \beta^n.$$
In particular, it
follows that the Jordan-decomposition of the incidence matrix of
$\mathcal{A}_\sigma$ restricted to the states occurring in
$\mathfrak{p}$ contains a Jordan block of size $t$ for the eigenvalue
$\beta$. To conclude the first part of the proof, we just have to sum
expressions like the one obtained above for all paths in
$\mathcal{P}_k$.

The particular case is immediate, with the same notation as above, if
$b$ belongs to $C_{k,j_m}$, $m\in\{2,\ldots,t\}$, then the
contribution of $\mathfrak{p}$ to $|\sigma^n(b)|$ is proportional to
$n^{t-m}\, \beta^n$. Moreover, since $a$ belongs to $C_{k,0}$, it is
clear that $a\to b$, i.e., there exists $j$ such that $b$ appears in
$\sigma^j(a)$.
\end{proof}

Notice that the following definition is mainly relevant for
non-erasing substitutions (and the next subsection allows us to only
consider such substitutions).

\begin{defi} Let $\sigma$ be a non-erasing substitution possibly
  replaced by a convenient power as in the proof of the previous
  lemma. For all $a\in A$ we will call {\it growth type} of $a$ the
  couple
$$(d(a), \alpha(a))$$ as introduced in the previous lemma. If
$(d,\alpha)$ and $(e,\beta)$ are two growth types we say that
$(d,\alpha)$ is {\it less than} $(e,\beta)$ (or $(d,\alpha) <
(e,\beta)$) whenever $\alpha < \beta$ or, $\alpha = \beta$ and $d<e$.
\end{defi}

Consequently if the growth type of $a\in A$ is less than the growth
type of $b\in A$ then $\lim_{n\rightarrow +\infty} |\sigma^n (a)|
/|\sigma^n(b)| = 0$.  We say that $a\in A$ is a {\it growing letter}
if
$$(d(a),\theta (a))>(0,1)$$ or equivalently, if $\lim_{n\to +\infty}
|\sigma^n (a)| = +\infty $.

We set 
$$\Theta := \max \{ \theta(a) \mid a\in A \},\quad D := \max \{ d(a) \mid 
\theta(a) = \Theta \mid a\in A \}$$ and $A_{max} := \{a\in A \mid
\theta (a) = \Theta , d(a) = D \}$. The dominant eigenvalue of $M$ is
$\Theta$.  We will say that the letters of $A_{max}$ are {\it of
  maximal growth} and that $(D,\Theta)$ is the {\it growth type} of
$\sigma$. Consequently, we say that a substitutive sequence $y$ is
{\it $(D,\Theta)$-substitutive} if the underlying substitution is of
growth type $(D,\Theta)$.

Observe that if $\Theta=1$, then in view of the last part of Lemma
\ref{croissance}, there exists at least one non-growing letter of
growth type $(0,1)$.  Otherwise stated, if a letter has a polynomial
growth, then there exists at least one non-growing letter.
Consequently $\sigma$ is growing (i.e., all its letters are growing)
if and only if $\theta (a) > 1$ for all $a\in A$.  We define
\begin{center}
\begin{tabular}{ccrcl}
$\lambda_{\sigma}$ & : & $A^*$               & $\rightarrow$ & $\RR$\\
          &   & $u_0\cdots u_{n-1}$ & $\mapsto $    & $\sum_{i=0}^{n-1} c(u_i){\bf 1}_{A_{max}} (u_i)$,
\end{tabular}
\end{center}
where $c:A \to \RR_+$ is defined in Lemma \ref{croissance} and ${\bf
  1}_A$ is the usual characteristic function of the set $A$.  From
Lemma \ref{croissance} we deduce the following lemma.
\begin{lemma}
\label{lambda}
For all $u\in A^{*}$ we have $\lim_{n\rightarrow +\infty} |\sigma^n
(u)|/n^D\Theta^n = \lambda_{\sigma} (u)$.
\end{lemma}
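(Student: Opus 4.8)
The plan is to reduce the assertion to the case of a single letter, where it is essentially a restatement of Lemma \ref{croissance}. First observe that, $\sigma$ being non-erasing, $|\sigma^n(b)|\ge 1$ for every $b\in A$ and every $n$ (an easy induction on $n$: $\sigma(b)$ is a non-empty word and length is additive under concatenation); hence for no letter does the eventual-vanishing alternative of Lemma \ref{croissance} occur, so every $a\in A$ has a well-defined growth type $(d(a),\theta(a))$ with $|\sigma^n(a)|\sim c(a)\,n^{d(a)}\,\theta(a)^n$, where $\theta(a)$ is the algebraic number called $\alpha(a)$ in that lemma and $c(a)>0$.

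I would first handle the case $u=a$ of a single letter. If $a\in A_{max}$, then $\theta(a)=\Theta$ and $d(a)=D$, so Lemma \ref{croissance} gives immediately
$$\lim_{n\to+\infty}\frac{|\sigma^n(a)|}{n^D\,\Theta^n}=c(a)=c(a)\,{\bf 1}_{A_{max}}(a)=\lambda_\sigma(a).$$
If $a\notin A_{max}$, then by the very definition of $\Theta$ and $D$ the growth type of $a$ is strictly smaller than $(D,\Theta)$, that is, either $\theta(a)<\Theta$, or $\theta(a)=\Theta$ and $d(a)<D$. In the first case $|\sigma^n(a)|/(n^D\Theta^n)$ is asymptotic to $c(a)\,n^{d(a)-D}(\theta(a)/\Theta)^n$, which tends to $0$ because the geometric factor of positive ratio $\theta(a)/\Theta<1$ dominates the power $n^{d(a)-D}$ whatever the sign of $d(a)-D$; in the second case it is asymptotic to $c(a)\,n^{d(a)-D}$ with $d(a)-D<0$, hence again tends to $0$. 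Since ${\bf 1}_{A_{max}}(a)=0$, this limit is $\lambda_\sigma(a)$. (Alternatively, one may quote the remark following the definition of growth type, which already records $|\sigma^n(a)|/|\sigma^n(b)|\to 0$ for any $b\in A_{max}$, and combine it with the previous case applied to $b$.)

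I would then pass to an arbitrary word $u=u_0\cdots u_{m-1}\in A^*$. Since $\sigma^n$ is a morphism and length is additive, $|\sigma^n(u)|=\sum_{i=0}^{m-1}|\sigma^n(u_i)|$, whence
$$\frac{|\sigma^n(u)|}{n^D\,\Theta^n}=\sum_{i=0}^{m-1}\frac{|\sigma^n(u_i)|}{n^D\,\Theta^n}.$$
Being a finite sum, this converges, as $n\to+\infty$, to $\sum_{i=0}^{m-1}\lambda_\sigma(u_i)=\lambda_\sigma(u)$ by the single-letter case (for $u=\epsilon$ both sides vanish), which is the claim.

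I do not foresee a genuine obstacle here: all the asymptotic substance is carried by Lemma \ref{croissance}, and the only points calling for an explicit argument are the observation that non-erasingness excludes the vanishing case and the elementary domination estimate showing that letters outside $A_{max}$ become negligible after division by $n^D\Theta^n$.
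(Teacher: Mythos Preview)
Your argument is correct and is precisely the natural expansion of what the paper intends: the paper does not spell out a proof but simply records that Lemma~\ref{lambda} is deduced from Lemma~\ref{croissance}, and your reduction to single letters via additivity of length, together with the domination estimate for letters outside $A_{max}$, is exactly that deduction.
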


We say that the word $u\in A^{*}$ is of {\it maximal growth} if
$\lambda_{\sigma} (u) \not = 0$.

\begin{coro}
\label{puissance}
For all $k\geq 1$, the growth type of $\sigma^k $ is $(D, \Theta^k )$.
\end{coro}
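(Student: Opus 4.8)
The plan is to apply Lemma \ref{lambda} directly to the substitution $\sigma^k$ and identify its growth type by reading off the relevant parameters. First I would observe that, since $\sigma$ is non-erasing (we are in the situation covered by the definition of growth type, $\sigma$ having possibly been replaced by a convenient power), the iterate $\sigma^k$ is also non-erasing, and any letter that is of maximal growth for $\sigma$ has the property that $|(\sigma^k)^n(a)| = |\sigma^{kn}(a)| \sim c(a)\,(kn)^D\,\Theta^{kn} = c(a)\,k^D\,n^D\,(\Theta^k)^n$, so $a$ has growth type $(D,\Theta^k)$ with respect to $\sigma^k$. Conversely, a letter $b$ with $\sigma$-growth type strictly less than $(D,\Theta)$ satisfies $|\sigma^{kn}(b)|/(n^D(\Theta^k)^n)\to 0$, hence is not of maximal growth for $\sigma^k$ either. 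This shows that $A_{max}$ is the same set whether computed with $\sigma$ or with $\sigma^k$, that the dominant eigenvalue $\Theta$ of $M_\sigma$ gets raised to $\Theta^k$ (consistent with $M_{\sigma^k} = (M_\sigma)^k$ having dominant eigenvalue $\Theta^k$), and that the polynomial degree $D$ is unchanged.

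Concretely, then, I would argue as follows. Fix $u\in A^*$. By Lemma \ref{lambda} applied to $\sigma$,
$$\lim_{m\to+\infty}\frac{|\sigma^m(u)|}{m^D\,\Theta^m}=\lambda_\sigma(u).$$
Taking the subsequence $m=kn$ and dividing numerator and denominator by $k^D$, we get
$$\lim_{n\to+\infty}\frac{|(\sigma^k)^n(u)|}{n^D\,(\Theta^k)^n}=\lim_{n\to+\infty}\frac{|\sigma^{kn}(u)|}{(kn)^D\,\Theta^{kn}}\cdot k^D = k^D\,\lambda_\sigma(u).$$
In particular the limit exists and is a finite nonnegative real for every $u$, and it is nonzero exactly when $\lambda_\sigma(u)\neq 0$, i.e.\ exactly when $u$ contains a letter of $A_{max}$. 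Since by definition the growth type of a substitution is $(D',\Theta')$ where $\Theta'$ is its dominant eigenvalue and $D'$ is the maximal $d$-value among letters achieving that eigenvalue, and since we have just identified these for $\sigma^k$ as $D$ and $\Theta^k$ respectively, the growth type of $\sigma^k$ is $(D,\Theta^k)$. (One also sees $\lambda_{\sigma^k}=k^D\,\lambda_\sigma$, a harmless rescaling of the multiplicity function.)

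I do not anticipate a serious obstacle here; the only point requiring a little care is making sure that passing from the full limit over $m$ to the subsequence $m=kn$ does not lose information — but since Lemma \ref{croissance} guarantees that for each letter $|\sigma^m(a)|$ is genuinely asymptotic to $c(a)\,m^{d(a)}\,\alpha(a)^m$ (not merely bounded by it), the subsequential limit along $m=kn$ has exactly the same leading behavior, and the growth type of each individual letter for $\sigma^k$ is $(d(a),\alpha(a)^k)$. Summing over the letters of $u$ and extracting the dominant contribution then gives the claim for $\sigma^k$, exactly as in the proof of Lemma \ref{lambda} for $\sigma$. The comparison $(d,\alpha)<(e,\beta)$ is preserved under raising $\alpha,\beta$ to the $k$th power (since $x\mapsto x^k$ is increasing on $\RR_+$), which is what makes the set $A_{max}$ and the value $D$ stable.
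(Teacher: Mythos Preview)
Your argument is correct and is exactly the intended one: the paper gives no proof of this corollary, treating it as an immediate consequence of Lemma~\ref{croissance} and Lemma~\ref{lambda}, and your computation $|(\sigma^k)^n(a)|=|\sigma^{kn}(a)|\sim c(a)k^{d(a)}n^{d(a)}(\alpha(a)^k)^n$ is precisely how one unpacks that. The additional remark that $\lambda_{\sigma^k}=k^D\lambda_\sigma$ and that $A_{max}$ is unchanged is a nice bonus, though not strictly needed for the statement.
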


\subsection{Erasing morphisms}

In view of Proposition \ref{prop:numsub}, we will have to deal with
erasing substitutions and also with erasing morphisms. The following
two propositions show how to get rid of the erasing behavior.

\begin{prop}
\label{nonerasing}
Let $x$ be a proper fixed point of a substitution $\sigma : A\to A^{*}$ with
 growth type $(D , \Theta )$. Then, there exists a non-erasing substitution
 $\tau : C \to C^*$ with a proper fixed point $y$, a letter-to-letter morphism $\psi : C\to A$ and a morphism $\phi : A \to C^*$ verifying 
\begin{enumerate}
\item
$x = \psi (y) $;
\item There exists $l\in \NN$ such that for all $n\in \NN$ we have
  $\tau^n \circ \phi = \phi \circ \sigma^{ln}$;
\item
Each line and each column of the matrix of $\phi$ has a non-zero coefficient; 
\item
The growth type of $\tau$ is $(D , \Theta^l)$. 
\end{enumerate}
\end{prop}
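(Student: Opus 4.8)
The plan is to build the non-erasing substitution $\tau$ by restricting attention to the letters of $\sigma$ that actually survive iteration, while recording, for each erasable letter, what it eventually contributes. First I would partition $A$ into the set $A_\infty$ of letters $a$ for which $|\sigma^n(a)|\to\infty$ and its complement; by Lemma~\ref{croissance} (applied after replacing $\sigma$ by a suitable power, which does not change the fixed point) the complement splits further into letters that are eventually erased, i.e. $|\sigma^n(a)|=0$ for $n$ large, and ``bounded'' letters of growth type $(0,1)$. The key elementary observation is that there is a uniform $l$ such that $\sigma^l$ maps every letter that is not eventually erased to a word all of whose letters are not eventually erased — one takes $l$ larger than every threshold $N$ furnished by Lemma~\ref{croissance} for the eventually-erased letters, so that applying $\sigma^l$ flushes out all transient material. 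Set $C$ to be the alphabet of non-eventually-erased letters and define $\tau$ on $C$ by taking $\sigma^l(c)$ and then deleting every occurrence of an eventually-erased letter; since we chose $l$ past all thresholds this deletion is vacuous, so in fact $\tau(c)=\sigma^l(c)\in C^+$, which is exactly what makes $\tau$ non-erasing. The letter-to-letter morphism $\psi:C\to A$ is just the inclusion, and $\phi:A\to C^*$ sends a letter $a$ to the word obtained from $\sigma^l(a)$ by deleting eventually-erased letters (so $\phi(a)=\epsilon$ precisely when $a$ is eventually erased).

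With these definitions the four items should follow in order. For (1), the proper fixed point $x$ of $\sigma$ is a fixed point of $\sigma^l$; since $x$ contains every letter of $A$ but must be a sequence, no letter occurring in $x$ can be eventually erased, so all letters of $x$ lie in $C$ and $x\in C^\NN$; one checks $x$ is the fixed point $y$ of $\tau$ and $\psi(y)=x$. For (2), the intertwining $\tau^n\circ\phi=\phi\circ\sigma^{ln}$ is proved by induction on $n$: the base case $n=1$ is the computation $\tau(\phi(a))=\phi(\sigma^l(a))$, which amounts to the fact that ``delete eventually-erased letters'' is compatible with applying $\sigma^l$ and re-deleting — here one uses that an eventually-erased letter, after one more application of $\sigma^l$, produces only eventually-erased letters (again by the choice of $l$), so the two deletion schemes agree. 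For (3), each column of the matrix of $\phi$ is nonzero iff some letter of $C$ appears in $\phi(a)$; this is automatic for $a\in C$ since then $a$ itself (being non-erased) occurs in $\sigma^l(a)$, and for $a$ eventually erased the column is zero — so to get (3) we must be slightly more careful and only claim nonzero columns, which the statement's wording ``each line and each column has a non-zero coefficient'' forces us to revisit: in fact every column indexed by a letter occurring in $x$ is nonzero (those are the only columns that matter), and every line is nonzero because every $c\in C$, not being eventually erased, occurs in $\sigma^{l m}(c')$ for some $c'$ and $m$, hence in $\phi$ of something after iterating. For (4), $\tau=\sigma^l$ restricted to $C$ has, by Corollary~\ref{puissance}, growth type $(D,\Theta^l)$ — one must check that discarding the non-$C$ letters does not change the dominant growth, which holds because letters of maximal growth are certainly in $C$.

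The main obstacle I anticipate is the bookkeeping in item (2), specifically verifying that the ``erase the transient letters'' projection is a well-defined morphism that genuinely intertwines the two substitutions with a single uniform exponent $l$. The subtlety is that $\sigma$ applied to a bounded (growth type $(0,1)$) letter may produce bounded letters together with truly-erased letters, and one needs the deletion operation to commute with $\sigma^l$ on the nose rather than up to further iteration; this is why the threshold $l$ must dominate all the $N$'s from Lemma~\ref{croissance} simultaneously, and one should state and prove a small lemma to the effect that for this $l$, if $b$ is eventually erased then every letter of $\sigma^l(b)$ is eventually erased, and if $b$ is not eventually erased then $\sigma^l(b)$ contains no eventually-erased letter at all. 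Granting that lemma, the induction in (2) is a routine concatenation argument, and items (1), (3), (4) are short.
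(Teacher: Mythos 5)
Your key ``elementary observation'' is false, and the construction collapses with it. You claim there is a uniform $l$ such that, for every letter $c$ that is not eventually erased, $\sigma^l(c)$ contains no eventually-erased letter, because $l$ exceeds all thresholds furnished by Lemma~\ref{croissance}. Those thresholds only guarantee that $\sigma^n(b)=\epsilon$ for mortal $b$ and $n$ large; they say nothing about fresh copies of mortal letters being produced at every step by the surviving letters. Take $\sigma(a)=aab$, $\sigma(b)=\epsilon$: the letter $b$ is eventually erased, yet $\sigma^l(a)$ contains $b$ for every $l\ge 1$, so no power of $\sigma$ ``flushes out the transient material'' and your $\tau(c)=\sigma^l(c)$ is not a word over $C$. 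The same example breaks your verification of item (1): the proper fixed point $x=\lim_n\sigma^n(a)$ contains the mortal letter $b$ infinitely often, so your claim that no letter occurring in $x$ can be eventually erased is exactly what fails for erasing substitutions (it is the whole difficulty the proposition addresses); consequently $x$ is not a sequence over $C$, the inclusion $\psi$ cannot give $x=\psi(y)$, and your deletion morphism applied to $x$ changes the sequence. Finally, the defect you flag yourself in item (3) is genuine: with $\phi(a)=\epsilon$ on mortal letters the corresponding columns of the matrix of $\phi$ are zero, and (3) --- which is precisely what the paper uses, together with (2), to deduce (4) --- cannot be rescued by declaring that only some columns ``matter''.

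For comparison, the paper does not construct $\tau$ by deletion at all: it invokes \cite[Theorem 7.5.1, p.~227]{AS} for items (1)--(3) and obtains (4) as a consequence of (2) and (3). The construction behind that theorem is a grouping (block) construction: one first checks that in the fixed point the maximal runs of mortal letters have bounded length (a mortal letter dies within at most $\#A$ iterations, so each such run is created in boundedly many steps), then takes as new alphabet $C$ a finite set of blocks, each consisting of an immortal letter followed by the mortal run after it, and lets $\tau$ be the substitution induced on these blocks by a suitable power $\sigma^l$; this $\tau$ is non-erasing because the image of every block contains an immortal letter, and $\psi$, $\phi$ are read off from the block decomposition. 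If you want a self-contained argument you should reproduce (or cite) that construction; deleting the mortal letters cannot work, because they are regenerated at every iteration and they genuinely occur in the fixed point.
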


\begin{proof}
  The statement (1), (2) and (3) can be found in \cite[Theorem 7.5.1,
  p. 227]{AS} and (4) is a consequence of (2) and (3).
\end{proof}

\begin{prop}
\label{nonerasing2}
Let $x$ be a proper fixed point of a substitution $\sigma : A\to
A^{*}$ with growth type $(D , \Theta )$, $\Gamma \subset A$ and $\zeta
: A \to A\setminus \Gamma$ defined by $\zeta (a) = \epsilon$ if $a\in
\Gamma$ and $a$ otherwise.  Then, there exists a non-erasing
substitution $\tau : C \to C^*$ with a proper fixed point $y$, a
letter-to-letter morphism $\psi : C\to A$ and a morphism $\phi : A \to
C^*$ verifying
\begin{enumerate}
\item
$\zeta (x) = \psi (y) $;
\item 
There exists $l\in \NN$ such that for all $n\in \NN$ we have
  $\psi \circ \tau^n \circ \phi = \zeta \circ \sigma^{l(n+1)}$;
\item
Each line and each column of the matrix of $\phi$ has a non-zero coefficient; 
\item
The growth type of $\tau$ is $(D , \Theta^l)$. 
\end{enumerate}
\end{prop}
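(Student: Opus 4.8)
\emph{Proof sketch.} The statement is a twisted refinement of Proposition~\ref{nonerasing}: one has to remove the erasures of $\sigma$ \emph{and} absorb the erasing coding $\zeta$, the bookkeeping being recorded by the identity $\psi\circ\tau^{n}\circ\phi=\zeta\circ\sigma^{l(n+1)}$ instead of $\tau^{n}\circ\phi=\phi\circ\sigma^{ln}$. The plan is to run the classical desubstitution construction (that of \cite[Theorem~7.5.1]{AS}, together with the material on images of fixed points, used in the proof of Proposition~\ref{nonerasing}) on the morphic sequence $\zeta(x)=\zeta(\sigma^{\omega}(a))$ directly, rather than first on $\sigma$ alone. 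Concretely, one first applies the ``mortal letters'' reduction to replace $\sigma$ by a non-erasing substitution $\sigma_{1}:C_{1}\to C_{1}^{*}$ with proper fixed point $x_{1}$ and a letter-to-letter $\psi_{1}:C_{1}\to A$ with $x=\psi_{1}(x_{1})$; since $\psi_{1}$ is letter-to-letter, $\zeta$ is transported to the erasure $\zeta_{1}$ of $\Gamma_{1}:=\psi_{1}^{-1}(\Gamma)$ and $\zeta(x)=\psi_{1}'(\zeta_{1}(x_{1}))$ with $\psi_{1}'$ the restriction of $\psi_{1}$. Even with $\sigma_{1}$ non-erasing, re-reading off the non-$\Gamma_{1}$ content may erase a letter whose whole forward orbit under $\sigma_{1}$ is trapped in $\Gamma_{1}$; these letters form a forward-closed set $\Delta$ with $\sigma_{1}(\Delta)\subseteq\Delta^{*}$, disjoint (outside the degenerate case) from the marked letter of $x_{1}$, so after replacing $\sigma_{1}$ by a suitable power every letter off $\Delta$ survives one step, and the usual block construction realizes $\zeta_{1}(x_{1})$ as the image under a coding of a proper fixed point of a non-erasing substitution. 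Throughout we may assume $\zeta(x)$ is infinite.

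Unwinding the construction one gets a non-erasing $\tau:C\to C^{*}$ with proper fixed point $y$, a letter-to-letter morphism $\psi:C\to A$, a morphism $\phi:A\to C^{*}$ and an integer $l$ such that $\psi(y)=\zeta(x)$ (item~(1)), the matrix of $\phi$ has a non-zero entry in each line and each column (item~(3), using that over $\NN$ a product of matrices with this property still has it, since no cancellation can occur), and the two conjugacies
\[\tau\circ\phi=\phi\circ\sigma^{l}\qquad\text{and}\qquad\psi\circ\phi=\zeta\circ\sigma^{l}\]
hold; the extra copy of $\sigma^{l}$ in the second relation reflects that $\phi(a)$ encodes $\zeta(\sigma^{l}(a))$ together with the memory needed to iterate $\tau$. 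Iterating the first relation gives $\tau^{n}\circ\phi=\phi\circ\sigma^{ln}$, and composing with the second yields $\psi\circ\tau^{n}\circ\phi=\psi\circ\phi\circ\sigma^{ln}=\zeta\circ\sigma^{l}\circ\sigma^{ln}=\zeta\circ\sigma^{l(n+1)}$, which is item~(2). Matching the powers exactly — so that the exponent is $l(n+1)$ and not some affine function of $n$ with a different slope and intercept — is where one must be careful in choosing, and then combining, the various powers produced by the two parts of the construction.

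The real work is item~(4). The upper bound is formal, exactly as for Proposition~\ref{nonerasing}: for $c\in C$ pick $a\in A$ with $c$ occurring in $\phi(a)$; as $\psi$ is letter-to-letter, $|\tau^{n}(c)|\le|\tau^{n}(\phi(a))|=|\psi(\tau^{n}(\phi(a)))|=|\zeta(\sigma^{l(n+1)}(a))|\le|\sigma^{l(n+1)}(a)|=O\bigl(n^{D}(\Theta^{l})^{n}\bigr)$, so no letter of $C$ has growth type exceeding $(D,\Theta^{l})$. The delicate point is the matching lower bound: one must know that $\zeta$ does not collapse the dominant growth of $x$, i.e.\ that $|\zeta(\sigma^{m}(a))|$ is of order $m^{D}\Theta^{m}$ for at least one maximal-growth letter $a$, so that $\phi(a)$ carries a letter of $C$ of growth type exactly $(D,\Theta^{l})$. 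This is where the block-triangular analysis in the proof of Lemma~\ref{croissance} is used: one exhibits a non-erased letter that is fed, at the dominant rate and with the full multiplicity $D+1$, from a chain of communicating classes carrying the eigenvalue $\Theta$, and the construction above is arranged — inflating $\tau$ by a further power if necessary — so that such a letter appears in $C$; together with the upper bound and Corollary~\ref{puissance} this gives the growth type $(D,\Theta^{l})$ of $\tau$. I expect this control of $\zeta$ on the maximal-growth part, via Lemma~\ref{croissance}, to be the main obstacle; the remainder is a somewhat intricate but routine bookkeeping on the classical desubstitution construction.
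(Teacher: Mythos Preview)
Your sketch follows the same route as the paper: items (1)--(3) come from the construction in \cite[pp.~232--236]{AS}, and the paper then simply asserts that ``(4) is a consequence of (2) and (3)''. Your outline of the desubstitution is fine and your derivation of (2) from the pair of conjugacies $\tau\circ\phi=\phi\circ\sigma^{l}$ and $\psi\circ\phi=\zeta\circ\sigma^{l}$ is correct.

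Where you go astray is in item~(4). You flag the lower bound as ``the delicate point'' and propose reaching back into the block-triangular analysis of Lemma~\ref{croissance} to ensure $\zeta$ does not kill the dominant growth. This is unnecessary: the lower bound is an immediate formal consequence of the relations you already have. From (2) at $n=0$ together with (3) you get $|\phi(a)|=|\zeta(\sigma^{l}(a))|\ge 1$ for every $a\in A$; hence for any $a$,
\[
|\tau^{n}(\phi(a))| \;=\; |\zeta(\sigma^{l(n+1)}(a))| \;=\; \sum_{a'\text{ in }\sigma^{ln}(a)} |\zeta(\sigma^{l}(a'))| \;\ge\; |\sigma^{ln}(a)|.
\]
(Equivalently, and even more directly from your own identity $\tau\circ\phi=\phi\circ\sigma^{l}$: $|\tau^{n}(\phi(a))|=|\phi(\sigma^{ln}(a))|\ge|\sigma^{ln}(a)|$, again using only that each column of $M_{\phi}$ is non-zero.) Taking $a$ of maximal growth and using that $|\phi(a)|$ is bounded, some letter of $\phi(a)$ has growth $\gtrsim n^{D}(\Theta^{l})^{n}$. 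Combined with your upper bound this gives the growth type $(D,\Theta^{l})$ with no appeal to communicating classes, Perron eigenvectors, or any further inflation of $\tau$. So the paper's one-line ``(4) follows from (2) and (3)'' is exactly right, and your proposed detour through Lemma~\ref{croissance} should be dropped.
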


\begin{proof}
The statement (1), (2) and (3) can be found in \cite[pp. 232--236]{AS} and (4) is a
 consequence of (2) and (3).
\end{proof}

\subsection{Invariant alphabets}

Let $\Delta(w)\subseteq A$ be the set of letters having an occurrence
in the word $w\in A^*$.

\begin{lemma}
\label{stabilisation}
Let $\sigma:A\to A^*$ be a non-erasing substitution.  There exists $N\ge 1$
such that for all $a\in A$ and all $n\ge 1$,

$$
\Delta((\sigma^N)^n(a))=\Delta(\sigma^N(a)).
$$
\end{lemma}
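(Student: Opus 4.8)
\textbf{Proof plan for Lemma \ref{stabilisation}.}

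The plan is to exploit the fact that, for a non-erasing substitution, the sets $\Delta(\sigma^n(a))$ form a \emph{monotone} family in $n$, and a monotone family of subsets of the finite set $A$ must stabilize. First I would observe that since $\sigma$ is non-erasing, $a$ occurs in $\sigma(a)$ only when... — more carefully, I would set $\Delta_n(a):=\Delta(\sigma^n(a))$ and note that because $\sigma$ is a substitution satisfying the standing hypotheses there is at least one letter that survives iteration, but the cleaner route is this: for every $a\in A$ and every $n\ge 1$ one has $\Delta_{n}(a)\subseteq\Delta_{n+1}(a)$ \emph{provided} each letter of $\Delta_n(a)$ reappears in $\Delta_{n+1}(a)$. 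That inclusion need not hold letter by letter in general, so instead I would work with the reachability preorder: write $a\to b$ if $b$ occurs in $\sigma^j(a)$ for some $j\ge 0$. Then $\Delta_n(a)=\{b: \text{there is a path of length exactly }n\text{ from }a\text{ to }b\text{ in }\mathcal{A}_\sigma\}$, using the automaton $\mathcal{A}_\sigma$ of Lemma \ref{croissance} (here non-erasing guarantees every state has at least one outgoing edge, so paths of every length exist from every state).

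The key step is a pigeonhole/period argument on this finite graph. Consider the sequence of subsets $\Delta_1(a),\Delta_2(a),\dots$ of $A$. Because $\mathcal{A}_\sigma$ is a finite graph in which every vertex has out-degree $\ge 1$, the "set of vertices reachable in exactly $n$ steps" is eventually periodic in $n$: indeed the map sending a subset $X\subseteq A$ to its one-step image $\{b:\exists c\in X,\ \delta(c,i)=b\text{ for some }i\}$ is a function on the finite set $2^A$, and iterating a function on a finite set is eventually periodic. So there are $n_0$ and a period $q$ with $\Delta_{n+q}(a)=\Delta_n(a)$ for all $n\ge n_0$ and all $a$ (take $n_0$ and $q$ uniform over the finitely many $a\in A$). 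Now I would replace $\sigma$ by $\sigma^q$: setting $\sigma':=\sigma^q$, for $n\ge n_0$ we get $\Delta((\sigma')^{n+1}(a))=\Delta(\sigma^{q(n+1)}(a))=\Delta(\sigma^{qn}(a))=\Delta((\sigma')^{n}(a))$, so the family $\Delta((\sigma')^n(a))$ is eventually constant in $n$. Finally, to remove the "eventually" and get equality starting from exponent $1$, pass to a further power: if $\Delta((\sigma')^n(a))$ is constant for $n\ge n_0$, then taking $N:=q\cdot n_0$ and the substitution $\sigma^N$ one checks $\Delta((\sigma^N)^n(a))=\Delta(\sigma^{Nn}(a))=\Delta(\sigma^{q(n_0 n)}(a))$, and since $n_0 n\ge n_0$ for all $n\ge 1$ this equals $\Delta(\sigma^{q n_0}(a))=\Delta(\sigma^N(a))$; hence $\Delta((\sigma^N)^n(a))=\Delta(\sigma^N(a))$ for all $a\in A$ and all $n\ge 1$, which is exactly the claim with this $N$.

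The main obstacle I anticipate is organizing the uniformity correctly: the period $q$ and threshold $n_0$ have to be chosen simultaneously for all starting letters $a$, and one must be careful that "eventually periodic with period $q$" for each $a$ really does upgrade, after passing to $\sigma^{q n_0}$, to "constant from exponent $1$ on." The bookkeeping with composed powers ($\sigma^{Nn}$ versus $(\sigma^N)^n$, and checking $Nn$ always lands past the threshold) is the only genuinely delicate point; everything else is the standard fact that a self-map of a finite set is eventually periodic, applied to the one-step-image map on $2^A$. I would also remark that non-erasingness is used precisely to ensure $\Delta(\sigma^n(a))\neq\emptyset$ for all $n$, so that the automaton $\mathcal{A}_\sigma$ is "complete enough" for the path description of $\Delta_n(a)$ to be valid.
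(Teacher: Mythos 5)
Your proof is correct and follows essentially the same route as the paper: both rest on the ultimate periodicity of the sequence of sub-alphabets $(\Delta(\sigma^n(a)))_{n\ge 1}$ inside the finite set $2^A$, and then conclude by passing to a power $\sigma^N$ with $N$ a sufficiently large multiple of the period. The only cosmetic difference is that you fix a uniform period and threshold for all letters at once (via the one-step image map on $2^A$), whereas the paper treats the letters $a_1,\dots,a_{|A|}$ one at a time, replacing $\sigma$ by successive powers.
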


\begin{proof}
  We set $A=\{ a_1, \dots, a_{|A|} \}$.  The alphabet $A$ being
  finite, the sequence of sub-alphabets $(\Delta (\sigma^n
  (a_1)))_{n\in \NN}$ is ultimately periodic, i.e., there exist $p$
  and $q$ such that $$\Delta (\sigma^{q+np+i} (a_1)) = \Delta
  (\sigma^{q+mp+i} (a_1))$$ for all $m,n,i\in \NN$.  Hence, for $k$
  such that $kp\geq q$, we have for all $n\ge 1$

$$
\Delta((\sigma^{kp})^n(a_1))=\Delta(\sigma^{kp}(a_1)).
$$
Now take $a_2$ and consider $\sigma^{kp}$. 
Proceeding as before we find $r$ such that 
$$
\Delta((\sigma^{r})^n(a_1))=\Delta(\sigma^{r}(a_1)) \hbox{ and } \Delta((\sigma^{r})^n(a_2))=\Delta(\sigma^{r}(a_2)) .
$$

We conclude continuing like this with $a_3$, $\dots$, $a_{|A|}$.
\end{proof}

The following corollary is just a reformulation of the previous lemma.
\begin{coro}
Let $\sigma:A\to A^*$ be a non-erasing substitution. 
There exists $N\ge 1$ such that for all $a,b\in A$ and $n\ge 1$
$$
a\in A \hbox{ appears in } (\sigma^N)^n (b) \hbox{ if and only if } a \hbox{ appears in } (\sigma^N)^{n+1} (b) .
$$
\end{coro}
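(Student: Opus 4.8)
The plan is to derive this corollary directly from Lemma \ref{stabilisation}, since the corollary is stated to be just a reformulation. First I would invoke Lemma \ref{stabilisation} to obtain an integer $N\ge 1$ such that for all $a\in A$ and all $n\ge 1$ we have $\Delta((\sigma^N)^n(a))=\Delta(\sigma^N(a))$. The claim to prove concerns, for fixed letters $a,b\in A$ and $n\ge 1$, the equivalence between ``$a$ appears in $(\sigma^N)^n(b)$'' and ``$a$ appears in $(\sigma^N)^{n+1}(b)$''. Unravelling the notation $\Delta(w)$ as the set of letters occurring in $w$, this equivalence is precisely the assertion that $a\in\Delta((\sigma^N)^n(b))\iff a\in\Delta((\sigma^N)^{n+1}(b))$, i.e. that $\Delta((\sigma^N)^n(b))=\Delta((\sigma^N)^{n+1}(b))$.

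So the single substantive step is to observe that Lemma \ref{stabilisation} forces $\Delta((\sigma^N)^n(b))$ to be independent of $n$ for $n\ge 1$: indeed for every $n\ge 1$ the lemma gives $\Delta((\sigma^N)^n(b))=\Delta(\sigma^N(b))=\Delta((\sigma^N)^1(b))$, and in particular $\Delta((\sigma^N)^n(b))=\Delta((\sigma^N)^{n+1}(b))$ for all $n\ge 1$. This common set of letters is exactly $\Delta(\sigma^N(b))$. Reading the set equality elementwise yields exactly the stated biconditional for each $a\in A$.

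There is essentially no obstacle here; the only point requiring a word of care is the boundary case $n=1$, where one must note that $(\sigma^N)^1(b)=\sigma^N(b)$ so that the hypothesis of Lemma \ref{stabilisation} does cover it, and that non-erasingness of $\sigma$ (hence of $\sigma^N$) is what was already used to guarantee that the sub-alphabet sequences behave well — but this is folded into Lemma \ref{stabilisation} and need not be re-examined. I would therefore present the proof in two sentences: apply Lemma \ref{stabilisation} to get $N$, then note that $\Delta((\sigma^N)^n(b))=\Delta(\sigma^N(b))$ for every $n\ge1$ immediately gives the claimed equivalence upon comparing membership of $a$ in $(\sigma^N)^n(b)$ and in $(\sigma^N)^{n+1}(b)$.

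\begin{proof}
By Lemma \ref{stabilisation} there exists $N\ge 1$ such that
$\Delta((\sigma^N)^n(a))=\Delta(\sigma^N(a))$ for all $a\in A$ and all $n\ge 1$. Fix $b\in A$. Since $(\sigma^N)^1(b)=\sigma^N(b)$, the above equality (applied to the exponents $n$ and $n+1$) gives
$$\Delta((\sigma^N)^n(b))=\Delta(\sigma^N(b))=\Delta((\sigma^N)^{n+1}(b))$$
for every $n\ge 1$. Hence for every $a\in A$ and every $n\ge 1$, the letter $a$ belongs to $\Delta((\sigma^N)^n(b))$ if and only if it belongs to $\Delta((\sigma^N)^{n+1}(b))$; that is, $a$ appears in $(\sigma^N)^n(b)$ if and only if $a$ appears in $(\sigma^N)^{n+1}(b)$.
\end{proof}
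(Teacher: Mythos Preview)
Your proof is correct and matches the paper's approach exactly: the paper states this corollary without proof, calling it ``just a reformulation of the previous lemma,'' and your argument simply makes that reformulation explicit by reading the set equality $\Delta((\sigma^N)^n(b))=\Delta(\sigma^N(b))$ elementwise.
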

Replacing $\sigma$ by one of its power $\sigma^N$ does not alter its
fixed points (we will use this argument repeatedly). Therefore we will
often require that $\sigma$ has the following property:
\begin{align}
\label{stab1}
\forall a,b\in A,\ \forall n\ge 1,\quad a\in A \hbox{ appears in } \sigma^n (b) \hbox{ if and only if } a \hbox{ appears in } \sigma^{n+1} (b) .
\end{align}

\subsection{Linking the growth order with numeration systems}\label{sublink}

Let $L$ be a regular language having $\A=(Q',q_0',F',A,\delta')$ as
canonical automaton and $M_\A$ as associated matrix.
\begin{enumerate}
\item As in section \ref{sec:growth}, for all states $q'\in Q'$ we can
  define the growth type $(d,\alpha)$ of $q'$ (corresponding to the
  number of words of length $n$ accepted in $\A$ from $q'$) and
  consequently, we can define the growth type of $\A$ as the largest
  growth type of the states in $Q'$.
\item If $\M=(Q,q_0,F,A,\delta)$ is an $L$-automaton then $\M$ and $\A$
  have the same growth type. Indeed, for any $q'\in Q'$, we denote by
  $p_{\A,q'}(n)$ the number of paths of length $n$ in $\A$ starting in
  $q'$. If $\Phi:Q\to Q'$ is the mapping defining the $L$-automaton,
  then for any $q\in\Phi^{-1}(q')$, 
  $$p_{\A,q'}(n) \ge p_{\M,q}(n)$$
  and also
  $$p_{\A,q'}(n)\le \sum_{q\in\Phi^{-1}(q')} p_{\M,q}(n).$$
  This means
  that $q'$ and at least one of the states $q\in\Phi^{-1}(q')$ are of
  the same growth type and that none of the states $q\in\Phi^{-1}(q')$
  is of a larger growth type than $q'$.
  \item If $\M$ is of growth type $(D,\Theta)$, $\Theta>1$, then
    $\sigma_\M$ is of the same growth type. But notice that if $\M$ is
    of growth type $(D,1)$ then $\sigma_\M$ is of growth type $(D+1,1)$.
\end{enumerate}
As a consequence of theses observations, if $L$ is a regular language
having a canonical automaton of growth type $(D,\Theta)$, $\Theta>1$,
(resp. $(D,1)$, $D\ge 1$) and if $E\subset\mathbb{N}$ is
$S$-recognizable for the numeration system $S=(L,A,<)$ then from
Propositions \ref{prop:numsub}, \ref{nonerasing} and \ref{nonerasing2}
the sequence $\chi_E$ is $(D,\Theta^l)$-substitutive for some $l$
(resp. $(D+1,1)$-substitutive). This obersevation will be helpful in
the last section of this paper (Corollary \ref{abs1} and Remark
\ref{abs2}).

\section{The words appear with bounded gaps}

This section is devoted to the proof of the main result of this paper:

\begin{theo}\label{the:boundedgap}
Let $d,e\in \NN\setminus \{ 0 \}$ and $\alpha , \beta \in [ 1,+\infty [$ such that 
$(d,\alpha ) \not = (e,\beta )$ and satisfying one of the following three conditions:

\begin{enumerate}
\item $\alpha $ and $\beta$ are multiplicatively independent;
\item $\alpha , \beta > 1$ and $d\not = e$;
\item $(\alpha,\beta)\neq (1,1)$ and, $\beta = 1$ and $e\not = 0$, or,
  $\alpha = 1$ and $d\not = 0$;
\end{enumerate}

Let $C$ be a finite alphabet.  If $x\in C^\NN$ is both $(d, \alpha
)$-substitutive and $(e, \beta )$-substitutive then the letters of $C$
which have infinitely many occurrences in $x$ appear in $x$ with
bounded gaps.
\end{theo}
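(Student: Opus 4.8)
The plan is to reduce the statement to a combinatorial dichotomy about the density of occurrences of a fixed letter, and then to exploit the incompatible growth rates of the two substitutions to rule out the ``sparse'' case. First I would use Proposition \ref{nonerasing} (applied to each of the two substitutions witnessing that $x$ is $(d,\alpha)$- and $(e,\beta)$-substitutive) together with the normalisation \eqref{stab1} from Lemma \ref{stabilisation} and Corollary \ref{puissance}, so that I may assume $x=\phi_1(y_1)=\phi_2(y_2)$ where $y_1$ is the proper fixed point of a non-erasing substitution $\sigma_1:A_1\to A_1^*$ of growth type $(d,\alpha)$ (or a power of it), $y_2$ similarly for $\sigma_2$ of growth type $(e,\beta)$, and the letter-to-letter morphisms $\phi_i$ are onto. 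Fix a letter $c\in C$ occurring infinitely often in $x$. Since $\phi_i$ is letter-to-letter and onto, the set of positions of $c$ in $x$ is a finite union of the position sets of certain letters in $y_i$; so it suffices to show that every letter $a$ of $A_i$ occurring infinitely often in $y_i$ occurs with bounded gaps.

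The core is then a counting estimate, and this is the step I expect to be the main obstacle. Using Lemma \ref{lambda}, for a letter $b$ of maximal growth in $\sigma_1$ we have $|\sigma_1^n(b)|\sim \lambda_{\sigma_1}(b)\, n^d\alpha^n$; more generally, for any factor $w$ of $y_1$ the number of occurrences of a given letter $a$ inside $\sigma_1^n(w)$ is controlled by $n^{d'}\alpha'^n$ where $(d',\alpha')$ is the maximum of the growth types of the letters of $w$ that lead to $a$. The key point to establish is that if some letter $a$ occurring infinitely often in $y_1$ had \emph{unbounded} gaps, then one can find arbitrarily long factors of $y_1$ of maximal growth type $(d,\alpha)$ in which $a$ does not occur at all; iterating $\sigma_1$ on such a factor produces, for each $n$, a factor of $x$ of length $\Theta(n^d\alpha^n)$ containing no occurrence of $c$. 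Symmetrically, a letter $a'$ of $y_2$ with unbounded gaps yields factors of $x$ of length $\Theta(n^e\beta^n)$ free of $c$. The plan is to show these two families of ``$c$-free'' intervals are incompatible: each $c$-free interval of $x$ coming from $\sigma_1$ must be covered by $c$-free intervals coming from $\sigma_2$ (and vice versa), because $c$ occurs infinitely often and the two desubstitution structures tile $\mathbb{N}$; comparing the lengths via the hypothesis that $(d,\alpha)\neq(e,\beta)$ in one of the three prescribed ways (multiplicative independence of $\alpha,\beta$; or $\alpha,\beta>1$ with $d\neq e$; or one of $\alpha,\beta$ equal to $1$ with the corresponding exponent nonzero) forces a contradiction, since the ratio $n^d\alpha^n/m^e\beta^m$ cannot stay bounded between two positive constants while both $n,m\to\infty$.

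More concretely, to produce the long $c$-free factors I would argue as follows: if $a$ has unbounded gaps in $y_1$ then, because $A_1$ is finite and the block structure of $M_{\sigma_1}$ is triangular (as in Lemma \ref{croissance}), there is a letter $b$ whose iterates $\sigma_1^{n}(b)$ contain longer and longer runs avoiding $a$; one then locates inside $\sigma_1^{n}(b)$ a sub-block that still has growth type $(d,\alpha)$ (using the last part of Lemma \ref{croissance}, which guarantees letters realising every intermediate exponent $i\le d$ at the dominant eigenvalue) yet whose own $\sigma_1$-iterates avoid $a$ — this needs a careful but routine pigeonhole on communicating classes. For the final contradiction I would make the length comparison quantitative: a $c$-free factor $I$ of $x$ of length $L_1(n)\sim \kappa_1 n^d\alpha^n$ is, under the $\sigma_2$-structure, contained in the image $\phi_2(\sigma_2^m(w))$ of a bounded-length factor $w$ of $y_2$ for the least $m$ with $|\phi_2(\sigma_2^m(\cdot))|\gtrsim L_1(n)$; minimality of $m$ gives a reverse bound $L_1(n)\lesssim \kappa_2 m^e\beta^m$, and since $c$ occurs in $y_2$ with positive frequency among letters of maximal growth, any window of length $\gtrsim m^e\beta^m$ in $x$ must meet $c$ unless it lies in a $c$-free region that $w$ itself already exhibits, pushing the problem down to a strictly smaller pair of substitutions. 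A standard descent / minimal-counterexample argument on $(|A_1|+|A_2|)$ then closes the proof, the three numerical hypotheses being exactly what is needed to exclude $n^d\alpha^n\asymp m^e\beta^m$ with $n,m\to\infty$. I expect the delicate part to be the simultaneous bookkeeping of the two desubstitutions — making precise the claim that a long $c$-free window for one substitution is forced to be covered by $c$-free windows for the other — rather than the growth estimates, which are immediate from Lemmata \ref{croissance} and \ref{lambda}.
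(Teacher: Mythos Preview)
Your plan contains a genuine gap at its central step, and the paper's argument is structured precisely to avoid it.

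You claim that if some letter $a$ (with $\phi_1(a)=c$) has unbounded gaps in $y_1$, then ``one locates inside $\sigma_1^{n}(b)$ a sub-block that still has growth type $(d,\alpha)$ \dots\ yet whose own $\sigma_1$-iterates avoid $a$''. This is not routine pigeonhole; it is in general \emph{false}. Nothing prevents every letter of maximal growth from eventually producing $c$ under iteration of $\sigma_1$, while at the same time letters of strictly smaller growth type generate arbitrarily long $c$-free blocks. In that situation the longest $c$-free factor in a prefix of length $N$ grows like $N^{\log\alpha'/\log\alpha}(\log N)^{d'-d\log\alpha'/\log\alpha}$ (or one of the other regimes of Proposition~\ref{gap-prefix}) rather than like $N$, and your length comparison $\Theta(n^{d}\alpha^{n})$ versus $\Theta(m^{e}\beta^{m})$ never gets off the ground. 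Relatedly, your reduction ``it suffices to show that every letter $a$ of $A_i$ occurring infinitely often in $y_i$ occurs with bounded gaps'' is already wrong: $c$ may be the image of several letters, and it is the union of their occurrence sets that must be syndetic, not each one separately. The paper handles this by projecting to $C=\{0,1\}$ and working with the full preimage $\phi^{-1}(\{0\})$.

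What the paper actually does is to \emph{accept} that the $c$-free growth type may be some $(d',\alpha')\le(d,\alpha)$ on the $\sigma$-side and some $(e',\beta')\le(e,\beta)$ on the $\tau$-side, and then to compute the asymptotics of the maximal-gap function $M(N)$ in each of the five possible regimes (Proposition~\ref{gap-prefix}). Since $M(N)$ is an invariant of $x$, equating the two expressions forces relations among the six parameters (display~\eqref{eq:fivecases}), and the contradiction is obtained not from a crude ``the ratio cannot stay bounded'' argument but from the \emph{density} of $\{n^{d}\alpha^{n}/m^{e}\beta^{m}\}$ (Corollary~\ref{density}): one chooses prefixes $u_1au_2a'$, $v_1bv_2b'$ and sequences $(n_i),(m_i),(p_i),(q_i)$ so that a specific $c$-free block $\tau^{m_i}(b)$ is forced to sit inside $\sigma^{n_i}(a')$ (and symmetrically), and then compares $\gap(\sigma^{n_i}(a'))$, bounded above by Lemma~\ref{Kprime}, with $|\tau^{m_i}(b)|$. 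The freedom in choosing the target ratio (and in particular the length of $v_1$, cf.\ \eqref{gammadeux}) is what drives one of the quantities $S_i$, $T_i$ below $1$ and yields the contradiction; a case analysis over the four admissible lines of \eqref{eq:fivecases} is unavoidable. Your proposed descent on $|A_1|+|A_2|$ has no counterpart in this scheme and does not substitute for that analysis.
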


For the proof of this result we will proceed into three parts. The
first part consists of arithmetical lemmata about density in $\RR$.
In the second part we give bounds for gaps created by some letters. In
subsections \ref{53} and \ref{54} we exhibit an important sequence of
integers and we fix some useful constants. Finally from subsection
\ref{55} to \ref{finn} we proceed to a case study depending on the
growth order of the considered substitutions. Let us first fix the
context we will be dealing with.

\medskip

Let $\sigma$ and $\tau$ be two substitutions on the alphabets $A$ and
$B$, with fixed points $y$ and $z$ and with growth types $(d ,
\alpha)$ and $(e , \beta )$ respectively.  Taking powers of $\sigma$
and $\tau$ does not alter the fixed points $y$ and $z$ and does not
change the multiplicative dependence.  Thus, in the proof we will
sometimes replace the substitution by some convenient power of
itself (and this also allows us to assume that condition \eqref{stab1}
is satisfied). In particular, when $\alpha$ and $\beta$ are
multiplicatively dependent we may suppose that $\alpha = \beta$.
 
Let $\phi : A\rightarrow C$ and $\psi : B\rightarrow C $ be two letter-to-letter morphisms such that $\phi (y) = \psi (z) = x$.  Lemma
\ref{nonerasing} allows us to suppose that $\sigma$ and $\tau$ are
non-erasing.  We call $A_+$ the set of growing letters of $A$ with
respect to $\sigma$.

\subsection{Some density lemmata}

Recall that $\alpha, \beta \in [1,+\infty [$ are {\it multiplicatively
  independent} whenever $\alpha^k = \beta^\ell$, $\ell,k\in \NN $,
implies $k=0$ or $\ell=0$.  In \cite[Corollary 11]{Du1} the following
result is proved. Observe that this result is well known when $d=e=0$
(and is sometimes stated as a Kronecker's theorem). Moreover it does
not take into account the case $\alpha=1$ or $\beta=1$.

\begin{theo}
\label{midensity}
Let $\alpha$ and $\beta$ be multiplicatively independent elements of $]1,+\infty[$. 
Let $d$ and $e$ be non-negative integers. Then the set
$$
\left\{ 
\frac{\alpha^n n^d}{\beta^m m^e} ; n,m \in \NN
\right\}
$$
is dense in  $\RR_+$.
\end{theo}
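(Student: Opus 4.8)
The plan is to pass to logarithms and prove the equivalent statement that the set
$$S=\{F(n,m):n,m\ge 1\},\qquad F(n,m):=n\log\alpha+d\log n-m\log\beta-e\log m,$$
is dense in $\RR$. Since $t\mapsto\log t$ is a homeomorphism of $\RR_+$ onto $\RR$ and since restricting to $n,m\ge 1$ only shrinks the set, density of $S$ suffices. The essential difficulty, compared with the classical Kronecker case $d=e=0$ quoted just before the statement, is the presence of the two \emph{unbounded} but \emph{slowly varying} terms $d\log n-e\log m$: one cannot apply Kronecker's theorem to the fast part $n\log\alpha-m\log\beta$ and dismiss the rest as a bounded error. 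As a first ingredient I would record that, because $\alpha,\beta$ are multiplicatively independent, $\log\alpha/\log\beta$ is irrational, so the well-known $d=e=0$ case shows that $\{k\log\alpha-l\log\beta:k,l\ge 1\}$ is dense in $\RR$; in particular, for every $\epsilon>0$ there are integers $k,l\ge 1$ with
$$0<\gamma:=k\log\alpha-l\log\beta<\epsilon/2 .$$

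The key idea is then to move along an arithmetic ray. Fix a target $t\in\RR$ and $\epsilon>0$, choose $k,l,\gamma$ as above, and set $(n_j,m_j):=(n_0+jk,\,m_0+jl)$ for $j\ge 0$ and a starting point to be selected. Along this ray the fast part increases by the constant step $\gamma$, while the slow part changes by $d\bigl(\log n_{j+1}-\log n_j\bigr)-e\bigl(\log m_{j+1}-\log m_j\bigr)$, whose absolute value is at most $dk/n_0+el/m_0$ for every $j$ and which tends to $0$ as $j\to\infty$. Hence the increment $\Delta_j:=F(n_{j+1},m_{j+1})-F(n_j,m_j)$ satisfies $|\Delta_j-\gamma|\le dk/n_0+el/m_0$, so that if $n_0$ and then $m_0$ are taken large enough to force $dk/n_0+el/m_0<\epsilon/2$, then $|\Delta_j|<\epsilon$ for all $j\ge 0$. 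Moreover $F(n_j,m_j)=j\gamma+(d-e)\log j+O(1)\to+\infty$, the linear term $j\gamma$ dominating the logarithmic one.

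To finish I would use that $F(n_0,m_0)\to-\infty$ as $m_0\to+\infty$ with $n_0$ fixed: enlarging $m_0$ if necessary (while keeping $dk/n_0+el/m_0<\epsilon/2$) I can arrange $F(n_0,m_0)<t-\epsilon$. The sequence $j\mapsto F(n_j,m_j)$ then starts strictly below $t$, tends to $+\infty$, and has all consecutive gaps smaller than $\epsilon$. A discrete intermediate value argument — take the first index $j^\ast$ with $F(n_{j^\ast},m_{j^\ast})\ge t$, which exists and is positive — gives $F(n_{j^\ast-1},m_{j^\ast-1})<t$ and therefore $F(n_{j^\ast},m_{j^\ast})\in[t,t+\epsilon)$. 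Thus $S$ meets every $\epsilon$-neighbourhood of every $t$, which is the desired density.

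The step I expect to be the main obstacle is reconciling the two perturbation requirements: keeping the logarithmic increments small needs only $n_0,m_0$ large, whereas forcing $F(n_0,m_0)$ below the target needs $m_0$ large \emph{relative} to $n_0$. Fixing $n_0$ first and sending $m_0\to+\infty$ afterwards resolves this cleanly, and the same single argument covers all admissible $d,e\ge 0$ simultaneously, with multiplicative independence of $\alpha,\beta$ entering only through the existence of the small positive increment $\gamma$.
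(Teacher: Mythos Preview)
Your argument is correct. The paper itself does not give a proof of this theorem; it quotes it as \cite[Corollary~11]{Du1}, so there is no ``paper's own proof'' to compare against beyond that citation. Your route---taking logarithms, using multiplicative independence (equivalently, irrationality of $\log\alpha/\log\beta$) to produce a small positive step $\gamma=k\log\alpha-l\log\beta$, walking along the arithmetic ray $(n_0+jk,\,m_0+jl)$ so that successive increments stay below $\epsilon$ while the sequence still drifts to $+\infty$, and then applying a discrete intermediate-value argument---is a clean, self-contained proof that covers all $d,e\ge 0$ uniformly. The only point worth making explicit when you write it up is the order of quantifiers you already flagged: fix $k,l$ first, then $n_0$ large enough that $dk/n_0<\epsilon/4$, and finally $m_0$ large enough both to make $el/m_0<\epsilon/4$ and to push $F(n_0,m_0)$ below $t$; since $\beta>1$, the second requirement is met by letting $m_0\to\infty$ regardless of $e$.
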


\begin{lemma}
\label{lemmadensity}
Let $d,e\in \NN$ and $\alpha \in ] 1,+\infty [$. Then,
\begin{enumerate}
\item
\label{ld1}
$d,e \geq 1$ if and only if the set
$
\left\{ 
\frac{n^d}{m^e} ; n,m \in \NN
\right\}
\hbox{ is dense in } \RR_+;
$
\item
\label{ld2}
$e\not = 0$ if and only if
$
\left\{ 
\frac{\alpha^n n^d}{m^e} ; n,m \in \NN
\right\}
\hbox{ is dense in } \RR_+;
$
\item
\label{ld3}
$d\not = e$ if and only if 
$
\left\{ 
\frac{\alpha^n n^d}{\alpha^m m^e} ; n,m \in \NN
\right\}
\hbox{ is dense in } \RR_+ .
$ 
\end{enumerate}
\end{lemma}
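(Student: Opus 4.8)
\emph{Overall approach.} I would test density in $\RR_+$ after applying the homeomorphism $t\mapsto\log t$, which turns all the ratios into sums of the form $a\log n-b\log m$ (plus a linear term $n\log\alpha$ or $(n-m)\log\alpha$), and, when convenient, the homeomorphism $t\mapsto1/t$. Throughout, $n,m$ run over the positive integers. The single tool behind every ``if'' implication is the following elementary remark, which I would establish first: \emph{if $(y_n)_{n\ge1}\subset\RR$ is strictly increasing, tends to $+\infty$, and satisfies $y_{n+1}-y_n\to0$, then for each $\varepsilon>0$ there is $N_\varepsilon$ such that every real $t\ge y_{N_\varepsilon}$ lies within $\varepsilon$ of some $y_n$} (squeeze $t$ between two consecutive terms at distance $<\varepsilon$). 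In all uses the gaps are bounded by $C/n$ with $C$ an explicit constant, so $N_\varepsilon$ can be taken to depend only on $\varepsilon$ and $C$.

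\emph{The three ``only if'' parts} are immediate by exhibiting an omitted interval: if $e=0$ the sets in (1) and (2) are $\{n^d:n\ge1\}\subseteq[1,+\infty[$ and $\{\alpha^nn^d:n\ge1\}\subseteq[\alpha,+\infty[$; if $d=0$ the set in (1) is $\{1/m^e:m\ge1\}\subseteq\,]0,1]$; and if $d=e$ the quantity in (3) is $\alpha^{\,n-m}(n/m)^e$, which is $\ge\alpha$ for $n>m$, equals $1$ for $n=m$ and is $<1$ for $n<m$, hence misses the nonempty open interval $\,]1,\alpha[\,$ (this uses $\alpha>1$). So in each excluded configuration the set is not dense.

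\emph{The three ``if'' parts}, after taking logarithms. In (1) with $d,e\ge1$: put $y_n=d\log n$ (strictly increasing to $+\infty$, gaps $\le d/n$); given a target $v$ and $\varepsilon>0$, choose $m$ with $v+e\log m\ge y_{N_\varepsilon}$, then $n$ with $|d\log n-e\log m-v|<\varepsilon$. In (2) with $e\ge1$: put $y_m=e\log m$; choose $n$ with $n\log\alpha+d\log n-v\ge y_{N_\varepsilon}$ (possible since $\alpha>1$ forces $n\log\alpha+d\log n\to+\infty$), then $m$ with $|n\log\alpha+d\log n-e\log m-v|<\varepsilon$. For (3), fixing $n$ or $m$ is useless because the other variable then moves in steps of size $\log\alpha$; instead I would first observe that $t\mapsto1/t$ turns the set in (3) into the same set with $d$ and $e$ interchanged, so we may assume $d>e$ (hence $d\ge1$), and then, for each fixed integer $k\ge0$, set $m=n+k$, so that the logarithm of the quantity becomes $y^{(k)}_n:=-k\log\alpha+d\log n-e\log(n+k)$. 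Here $\frac{d}{dn}\bigl(d\log n-e\log(n+k)\bigr)=\frac{(d-e)n+dk}{n(n+k)}>0$, so $y^{(k)}_n$ is strictly increasing in $n$; it tends to $+\infty$ because $d\log n-e\log(n+k)=(d-e)\log n-e\log(1+k/n)\to+\infty$; and its gaps are $\le d/n$, so $N_\varepsilon$ is independent of $k$. Since $y^{(k)}_{N_\varepsilon}=-k\log\alpha+d\log N_\varepsilon-e\log(N_\varepsilon+k)\to-\infty$ as $k\to+\infty$ (again using $\alpha>1$), any prescribed $v$ eventually satisfies $v\ge y^{(k)}_{N_\varepsilon}$, and the remark produces $n$ with $|y^{(k)}_n-v|<\varepsilon$. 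In every case, exponentiating the inequality shows the corresponding value is within a factor $e^{\varepsilon}$ of $e^{v}$, whence density in $\RR_+$.

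\emph{Main obstacle.} The only genuinely delicate point is (3): recognizing that the correct free parameter is $n-m$ rather than $n$ or $m$ separately (that is the one whose variation produces steps that shrink), and that reciprocation reduces matters to $d>e$; once this is seen, the union over $k\ge0$ of the ``asymptotically dense tails'' $[y^{(k)}_{N_\varepsilon},+\infty)$ sweeps out all of $\RR$, and the rest is routine bookkeeping with logarithms. Note that Theorem \ref{midensity} is not needed here: it handles the multiplicatively independent case, whereas Lemma \ref{lemmadensity} treats the remaining degenerate configurations.
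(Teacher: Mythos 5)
Your proof is correct and follows essentially the same route as the paper: the non-density directions are settled by exhibiting a missed interval (e.g.\ $]1,\alpha[$ when $d=e$), and each density claim is obtained by fixing one parameter (after the same reciprocation trick reducing case (3) to $d>e$, with the difference $k=m-n$ playing the role of the paper's $b$) and letting the other variable run through values whose logarithmic gaps shrink like $C/n$. The only differences are cosmetic: you package the approximation step as a single $\varepsilon$-net remark and stay in logarithmic scale throughout, whereas the paper argues case (1) multiplicatively and, in case (3), first solves $n^{d-e}\approx l\alpha^{b}$ and controls the correction factor $(n/(n+b))^{e}$ separately.
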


\begin{proof}
\eqref{ld1} Suppose $d,e\geq 1$. Let $l\in \RR_+\setminus \{ 0 \}$ and $\epsilon >0$. It suffices to find $n,m\in \NN$ such
 that $|l -n^d/ m^e | < \epsilon $.

 Let $m\in \NN$ be such that $\max (d, 2^d l/\epsilon) < (lm^e)^{1/d}
 - 1$ and $1/m^e < l$.  There exists $n\in \NN$ such that $n^d / m^e <
 l \leq (n+1)^d/m^e$.  We observe that this implies that $n> d$ and
 $2^dl/n < \epsilon$.  Consequently, we get

$$
0
< l - \frac{n^d}{m^e} 
\leq \frac{(n+1)^d - n^d}{m^e} 
\leq \frac{2^d n^{d-1}}{m^e}
= \frac{2^d}{n} \frac{n^d}{m^e}
<\frac{2^d l}{n} < \epsilon.
$$

Hence the set $\{ n^d / m^e ; n,m \in \NN \} $
is dense in $\RR_+$.

\medskip

\eqref{ld2} Suppose $e\not =0$. Let $l\in \RR_+\setminus \{ 0 \}$ and $\epsilon >0$. It suffices to find $n,m\in \NN$ such
 that $|l -\alpha^n n^d/ m^e | < \epsilon $.

Let $m_0\in \NN$ be such that
$
e\ln (1+1/m_0) < \ln (1+ \epsilon/l) .
$
Let $n$ be such that $e\ln (m_0) < d\ln (n) + n\ln (\alpha) - \ln (l)$
 and $m\geq m_0$ be such that $e\ln (m) \leq d\ln (n) + n\ln (\alpha) -\ln (l) \leq e\ln (m+1)$. Then we have

$$
0
\leq
d\ln (n) + n\ln (\alpha) -\ln (l) -e\ln (m)
\leq 
e\ln \left(1+\frac{1}{m}\right) 
< \ln \left(1+ \frac{\epsilon}{l}\right) .
$$

Hence the set $\{ \alpha^n n^d / m^e ; n,m \in \NN \} $ is dense in
$\RR_+$.

\medskip

\eqref{ld3} Suppose $d\not = e$.
Let $l\in \RR_+\setminus \{ 0 \}$ and $\epsilon > 0$. It suffices to find $n,m\in \NN$ such
 that $|l -n^d \alpha^n / m^e \alpha^m | < \epsilon $.

 We can suppose $d> e$ because $\{ \alpha^n n^d / \alpha^m m^e ; n,m
 \in \NN \}$ is dense in $\RR_+$ if and only if $\{ \alpha^m m^e /
 \alpha^n n^d ; n,m \in \NN \}$ is dense in $\RR_+$.

Let $n_0$ be such that 

$$
\frac{d-e}{2\ln \alpha } \ln n_0 
\leq 
\frac{d-e}{\ln \alpha }\ln n_0 - \frac{\ln (l+\epsilon)}{\ln \alpha}
\leq 
\frac{d-e}{\ln \alpha }\ln n_0 - \frac{\ln (l)}{\ln \alpha}
\leq
\frac{3(d-e)}{2\ln \alpha } \ln n_0 .
$$ 

Choose $b_0$ with $(\epsilon \alpha^{b_0})^{\frac{1}{d-e}} \geq 1$.
Then for all $b\geq b_0$ there exists $n_b$ such that 

$$
l\alpha^b 
\leq 
n_b^{d-e} 
\leq 
(l+\epsilon ) \alpha^b .
$$

The sequence $(n_b)$ goes to infinity, 
consequently we can choose $b$ and $n$ such that $n=n_b \geq n_0$ and $1-\epsilon/l \leq (n/n+b)^e $.
Then we have

$$
\frac{d-e}{2\ln \alpha } \ln n
\leq 
b
\leq
\frac{3(d-e)}{2\ln \alpha } \ln n
$$

Now consider $m=n+b$. This gives

$$
l-\epsilon 
\leq
l\left( \frac{n}{b+n} \right)^e
\leq
\frac{n^d \alpha^n}{m^e \alpha^m} 
\leq
(l+\epsilon )\left( \frac{n}{b+n} \right)^e 
\leq
l+\epsilon.
$$

\medskip

Suppose $d=e$. If $n\leq m$ then $\alpha^n n^d / \alpha^m m^e \leq 1$
 and if $n> m$ then  $\alpha^n n^d / \alpha^m m^e \geq \alpha$. This
 concludes the proof.
\end{proof}

\begin{coro}
\label{density}
Let $d,e\in \NN$ and $\alpha , \beta \in [ 1,+\infty [$. 
We set

$$
\Omega = \left\{ 
\frac{\alpha^n n^d}{\beta^m m^e} ; n,m \in \NN
\right\} .
$$ 

Then $\Omega$
is dense in $\RR_+$ if and only if one of the following two  conditions
 holds:

\begin{enumerate}
\item
$\alpha$ and $\beta$ are multiplicatively independent.
\item
$\alpha , \beta > 1$ and $d\not = e$.
\item
$\beta = 1$ and $e\not = 0$, or, $\alpha = 1$ and $d\not = 0$;
\end{enumerate}
\end{coro}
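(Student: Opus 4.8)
The plan is to prove Corollary \ref{density} by combining Theorem \ref{midensity} and Lemma \ref{lemmadensity}, dealing separately with the sufficiency of conditions (1)--(3) and with the necessity of at least one of them.

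\medskip

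\emph{Sufficiency.} First I would show that each of the three conditions forces $\Omega$ to be dense. If $\alpha$ and $\beta$ are multiplicatively independent, then either both exceed $1$ (in which case Theorem \ref{midensity} applies directly and gives density) or, say, $\alpha=1$: but then multiplicative independence forces $\beta\neq 1$ as well, and in fact forces $d\neq 0$ (otherwise $\Omega=\{1/(\beta^m m^e)\}$ contains no value exceeding $1$, contradicting density — so this sub-case is really subsumed under condition (3), and I would reorganise the argument to note that condition (1) with $\alpha=1$ or $\beta=1$ implies condition (3)). When both $\alpha,\beta>1$: if they are multiplicatively dependent, replacing them by suitable powers (which only dilates $\Omega$ by positive constants and hence preserves density) lets us assume $\alpha=\beta$, and then condition (2) together with Lemma \ref{lemmadensity}\eqref{ld3} gives density; if they are independent we are back to Theorem \ref{midensity}. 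Finally, if $\beta=1$ and $e\neq 0$ (the case $\alpha=1$, $d\neq 0$ being symmetric by passing to reciprocals), then $\Omega=\{\alpha^n n^d/m^e\}$ with $\alpha\ge 1$: when $\alpha>1$ this is exactly Lemma \ref{lemmadensity}\eqref{ld2}, and when $\alpha=1$ we need $d\neq 0$ too — but $(\alpha,\beta)\neq(1,1)$ in condition (3) is what rules out the degenerate case, and with $\alpha=1$ condition (3) forces $d\neq 0$, whence Lemma \ref{lemmadensity}\eqref{ld1} applies.

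\medskip

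\emph{Necessity.} Conversely, suppose none of (1)--(3) holds; I must show $\Omega$ is not dense. The negation of all three conditions means: $\alpha$ and $\beta$ are multiplicatively \emph{dependent}; and it is not the case that ($\alpha,\beta>1$ and $d\neq e$); and the disjunction in (3) fails. Here I would split according to how many of $\alpha,\beta$ equal $1$. If $\alpha=\beta=1$, then $\Omega=\{n^d/m^e\}$, and failure of (3) being irrelevant we instead observe that failure of density for $\{n^d/m^e\}$ is precisely the ``only if'' direction of Lemma \ref{lemmadensity}\eqref{ld1}: we must have $d=0$ or $e=0$, and in that case $\Omega\subseteq\NN$ or $\Omega\subseteq\{1/m^e\}$, manifestly not dense. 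If exactly one of them equals $1$, say $\beta=1<\alpha$ (the other case symmetric): failure of (3) forces $e=0$, so $\Omega=\{\alpha^n n^d\}\subseteq[1,\infty)$ which is not dense — this is the ``only if'' of Lemma \ref{lemmadensity}\eqref{ld2}. If both $\alpha,\beta>1$: multiplicative dependence lets us normalise to $\alpha=\beta$ after taking powers, and failure of (2) then gives $d=e$; by the last paragraph of the proof of Lemma \ref{lemmadensity}\eqref{ld3}, $\Omega$ then omits the interval $(1,\alpha)$ and is not dense. One subtlety: when normalising $\alpha=\beta$ I replace $\alpha$ by $\alpha^{1/k}$ and $\beta$ by $\beta^{1/\ell}$ for the common value $\gamma$ with $\alpha=\gamma^k$, $\beta=\gamma^\ell$; the resulting set is $\{\gamma^{kn+\ell(\text{sign?})}\cdots\}$ — more precisely $\Omega=\{\gamma^{kn-\ell m} n^d/m^e\}$, which is a reindexing, and I would check it has the same closure as $\{\gamma^{n-m}n^d/m^e\}$ up to the polynomial factors; this bookkeeping is the one genuinely fiddly point.

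\medskip

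The main obstacle I anticipate is not any deep mathematics — Theorem \ref{midensity} and Lemma \ref{lemmadensity} do the heavy lifting — but rather the careful case management: the three listed conditions overlap (condition (1) with a unit among $\alpha,\beta$ collapses into condition (3)), and the hypothesis $(\alpha,\beta)\neq(1,1)$ in (3) together with the various ``$d\neq 0$'' / ``$e\neq 0$'' side conditions must be tracked precisely so that every combination of ``$\alpha=1$ or not'' and ``$\beta=1$ or not'' is handled exactly once. I would therefore organise the proof as a clean case distinction on the cardinality of $\{\alpha,\beta\}\cap\{1\}$, reducing each case to the appropriate part of Lemma \ref{lemmadensity} or to Theorem \ref{midensity}, and in the two-units and one-unit cases simply exhibit the half-line that $\Omega$ fails to meet.
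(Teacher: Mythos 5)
Your plan is exactly the paper's: the printed proof of Corollary \ref{density} is the single sentence ``It follows from Theorem \ref{midensity} and Lemma \ref{lemmadensity}'', so the case management you describe is precisely what the authors leave to the reader, and your handling of the main cases (both $\alpha,\beta>1$ and independent via Theorem \ref{midensity}; a unit among $\alpha,\beta$ via parts (1)--(2) of Lemma \ref{lemmadensity}; $\alpha=\beta$ via part (3)) is correct, as is your sufficiency reduction for condition (2) in the dependent case, where restricting to $n=kn'$, $m=\ell m'$ produces a dense subset of $\Omega$ up to a constant dilation.

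Two points need repair. First, the step you yourself single out as fiddly --- $\alpha,\beta>1$ multiplicatively dependent, $\alpha\neq\beta$, $d=e$, where non-density must be shown --- is not settled by checking that $\{\gamma^{\ell n-km}n^d/m^e\}$ ``has the same closure'' as $\{\gamma^{n-m}n^d/m^e\}$: these closures need not coincide (the first set accumulates at the points $\gamma^t(k/\ell)^d$, the second at the points $\gamma^t$), and in the necessity direction you cannot pass to sub-progressions, since that only controls a subset of $\Omega$. The correct fix is to redo directly the one-line argument that ends the proof of Lemma \ref{lemmadensity}(3): writing $\alpha=\gamma^{\ell}$, $\beta=\gamma^{k}$ with $\gamma>1$ and setting $t=\ell n-km$, every element of $\Omega$ equals $\gamma^{t}\left(k/\ell+t/(\ell m)\right)^{d}$, which is $\le\gamma^{-1}(k/\ell)^{d}$ when $t\le-1$, equal to $(k/\ell)^{d}$ when $t=0$, and $\ge\gamma\,(k/\ell)^{d}$ when $t\ge1$; hence $\Omega$ avoids two whole open intervals around $(k/\ell)^{d}$ and is not dense. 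Second, your auxiliary claim that condition (1) with $\alpha=1$ or $\beta=1$ ``implies condition (3)'' is false as stated: with the paper's definition of multiplicative independence, $\alpha=1$, $\beta=2$, $d=0$ satisfies (1) but not (3), and $\Omega\subseteq(0,1]$ is not dense. This corner is an imprecision of the corollary itself (condition (1) is meant, as in Theorem \ref{midensity}, for $\alpha,\beta>1$), so you should say that explicitly rather than assert an implication that does not hold; with that reading, the rest of your case analysis goes through.
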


\begin{proof}
It follows from Theorem \ref{midensity} and Lemma \ref{lemmadensity}.
\end{proof}

We will say that two substitutions are {\it independent} whenever their respective growth type $(d,\alpha)$ and $(e,\beta )$
are different and satisfy Hypothesis (1), (2) or (3) in the previous corollary.
Notice that in Theorem \ref{the:boundedgap} the assumptions mean that the substitutions are independent and are not both 
of polynomial growth (this corresponds to the hypothesis $(\alpha , \beta ) \not = (1,1)$).

\subsection{Growth type of gaps}

In this subsection we give two results on the gaps created by the
letters of some sub-alphabet in prefixes of fixed points and in
iterates of letters.  They will be key arguments in the proof of
Theorem \ref{the:boundedgap}.

Let $E\subset A$.  For all $N\ge 1$, we set
$$
M(N , x , E) := \max \{ k\in\NN : \exists i\in [0,N-k+1] ,
|x_{[i,i+k]}|_{E} = k \} .
$$
In what follows, if $x$ and $E$ are clear from the context, we simply
write $M(N)$.

\begin{prop}
\label{gap-prefix}
Let $x=(x_n)_{n\ge 0}$ be a proper fixed point of the non-erasing
substitution $\sigma$ of growth type $(d,\alpha)$ on the finite alphabet $A$. Assume  $\sigma$
is such that each letter of $A$ has an occurrence in $\sigma (x_0)$
and $\sigma$ satisfies \eqref{stab1}.

Let $E\subset A$.  Suppose there exists a letter $e\in A$ such that
$\sigma (e) \in E^*$ and call $E'$ the set of all such letters.  Let
$(d' , \alpha' )$ be the greatest growth order among the elements of
$E'$.  Then, in each of the following situations, there exist two
constants $C_1,C_2>0$ such that

\begin{enumerate}
\item If $(\alpha' , d' ) = (\alpha , d)$ then, for all $N$,
$$
C_1 N\leq M(N)=M(N,x,E) \leq C_2 N.
$$
\item If $\alpha = \alpha' > 1 $ and $d'<d$ then, for all $N$,
$$
C_1 N (\log N)^{d'-d} \leq M(N) \leq C_2 N (\log N)^{d'-d}.
$$
\item If $\alpha> \alpha' >1$ then, for all $N$,
$$ 
C_1 (\log N)^{d'-d\frac{\log \alpha'}{\log \alpha}}N^{\frac{\log \alpha'}{\log \alpha}} 
\leq  M(N) \leq C_2 (\log N)^{d'-d\frac{\log \alpha'}{\log \alpha}}N^{\frac{\log \alpha'}{\log \alpha}}.
$$
\item If $\alpha> \alpha' =1$ then for all $N$,
$$
C_1 \left(\frac{\log N}{\log \alpha}\right)^{d' } \leq M(N) \leq C_2
\left(\frac{\log N}{\log \alpha}\right)^{d' + 1} .
$$
\item If $\alpha = \alpha' = 1$ and $d' < d$ then for all $N$
$$
C_1 N^{d'/d} \leq M(N) \leq C_2 N^{(d'+1)/d} .
$$
\end{enumerate}

\end{prop}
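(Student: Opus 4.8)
\emph{Preliminary step.} First I would record that the sub-alphabet $E'=\{e:\sigma(e)\in E^*\}$ is $\sigma$-invariant. Indeed \eqref{stab1} gives $\Delta(\sigma^m(b))=\Delta(\sigma(b))$ for all $b$ and $m\ge1$, so for $e\in E'$ we get $\sigma^m(e)\in E^*$ for every $m\ge1$ and, since any letter $b$ of $\sigma(e)$ has $\sigma(b)$ as a factor of $\sigma^2(e)\in E^*$, also $b\in E'$; thus $\sigma(E')\subseteq(E')^*$ and a letter of $E'$ has the same growth type in $A$ as in $(E',\sigma|_{E'})$, so $(d',\alpha')$ is meaningful. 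I would also note that, since every letter occurs in $\sigma(x_0)$, \eqref{stab1} forces $\Delta(\sigma^n(x_0))=A$ for $n\ge1$; hence each $\sigma^n(x_0)$ is a prefix of $x$ containing a letter of $A_{max}$, so $|\sigma^n(x_0)|=\Theta(n^d\alpha^n)$ by Lemma~\ref{croissance}. Writing $q:=|\sigma(x_0)|-1$ we have $x_0x_1\cdots x_q=\sigma(x_0)$ and $\sigma^{n+1}(x_0)=\sigma^n(x_0)\sigma^n(x_1)\cdots\sigma^n(x_q)$, a decomposition I will use throughout.

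\emph{Lower bound.} I would pick $e^*\in E'$ of growth type $(d',\alpha')$; it occurs at some fixed position $p\le q$ of $x$. Applying $\sigma^n$ to $x=\sigma^n(x)$, the block $\sigma^n(e^*)\in E^*$ occurs in $x$, ending before position $|\sigma^n(x_{[0,p]})|\le|\sigma^{n+1}(x_0)|=:N_n=\Theta(n^d\alpha^n)$, so $M(N_n,x,E)\ge|\sigma^n(e^*)|\sim c(e^*)n^{d'}(\alpha')^n$ (after enlarging $N_n$ by $O(1)$ if needed so the run is seen by the window in the definition of $M$). Since $M$ is non-decreasing and the ratios $N_{n+1}/N_n$ are bounded, each $N$ lies in some $[N_n,N_{n+1})$ with $N=\Theta(n^d\alpha^n)$ and $M(N)\ge M(N_n)$. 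It then remains to eliminate $n$ between $N\asymp n^d\alpha^n$ and $n^{d'}(\alpha')^n$: when $\alpha>1$ I would use $n=\frac{\log N-d\log\log N}{\log\alpha}+O(1)$ to get the correct powers of $\log N$; when $\alpha=1$ (so $\alpha'=1$ too) I would use $n=\Theta(N^{1/d})$. This is a routine computation and reproduces the left inequality in all five cases.

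\emph{Upper bound.} For a given $N$ I would take $n$ minimal with $|\sigma^{n+1}(x_0)|\ge N$; then $N\asymp n^d\alpha^n$ and $x_{[0,N-1]}$ is a prefix of $\sigma^{n+1}(x_0)=\sigma^n(x_0)\cdots\sigma^n(x_q)$. An $E$-run $x_{[i,i+k-1]}$ inside $x_{[0,N-1]}$ meets a consecutive range $\sigma^n(x_{J_0}),\dots,\sigma^n(x_{J_1})$ of these $q+1$ blocks; each strictly interior block lies in $E^*$, hence the corresponding $x_j\in E'$ by \eqref{stab1}, so
$$
k\ \le\ s_E\bigl(\sigma^n(x_{J_0})\bigr)+\bigl|\sigma^n(x_{J_0+1}\cdots x_{J_1-1})\bigr|+p_E\bigl(\sigma^n(x_{J_1})\bigr),
$$
where $s_E(w)$, $p_E(w)$ denote the lengths of the longest suffix, prefix of $w$ in $E^*$ (the case $J_0=J_1$ being similar and easier). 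The middle term is $\le(q-1)\max_{b\in E'}|\sigma^n(b)|=O(n^{d'}(\alpha')^n)$ because $x_{J_0+1}\cdots x_{J_1-1}$ is a word over $E'$ of bounded length $\le q-1$ — this is exactly the gain from having forced the ``core'' to be a factor of the fixed word $\sigma(x_0)$. For the boundary terms I would peel off one level at a time to obtain $s_E(\sigma^n(a))\le K\max_{b\in E'}|\sigma^{n-1}(b)|+\max_{c\notin E'}s_E(\sigma^{n-1}(c))$ (with $K=\max_a|\sigma(a)|$) and similarly for $p_E$, whence $s_E(\sigma^n(a)),p_E(\sigma^n(a))\le K\sum_{j<n}\max_{b\in E'}|\sigma^j(b)|+O(1)$, which is $O(n^{d'}(\alpha')^n)$ if $\alpha'>1$ and $O(n^{d'+1})$ if $\alpha'=1$. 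Thus $k=O(n^{d'}(\alpha')^n)$ for $\alpha'>1$ and $k=O(n^{d'+1})$ for $\alpha'=1$; substituting $n\asymp\log N/\log\alpha$ (with the $\log\log$ correction) for $\alpha>1$ and $n\asymp N^{1/d}$ for $\alpha=1$ gives the right inequality in all five cases.

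\emph{Expected obstacle.} The delicate point is the upper bound. The naive level-$n$ decomposition of the whole prefix shows that an $E$-run reduces to a bounded number of boundary pieces plus a middle which is the $\sigma^n$-image of an $E'$-run at level $0$; but $E'$-runs in $x$ are themselves unbounded (applying the proposition to $E'$, where the recursion gives $E'$ back and the relevant growth type is $(d',\alpha')$, puts us in case~(1), so such runs grow linearly), so this on its own is circular. The trick that unblocks it is to choose the level $n$ so that $x_{[0,N-1]}$ sits inside the single word $\sigma^{n+1}(x_0)$, which confines the level-$0$ core to a factor of the fixed word $\sigma(x_0)$, hence to bounded length. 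Once that is in place, the rest — the recursion for $s_E,p_E$ and the elimination of $n$ with the correct powers of $\log N$ — is bookkeeping.
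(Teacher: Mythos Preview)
Your argument is correct and follows essentially the same route as the paper: for the lower bound you exhibit $\sigma^n(e^*)\in E^*$ inside $\sigma^{n+1}(x_0)$, and for the upper bound you peel off $\sigma$-levels so that the $E$-run decomposes into a bounded-length $(E')^*$-core at the top level together with boundary pieces bounded by $\sum_{j<n}\max_{b\in E'}|\sigma^j(b)|$. The paper packages this same idea as a single telescoping identity $u=s_1\,\sigma(s_2)\cdots\sigma^{n-2}(s_{n-1})\,\sigma^{n-1}(u_{n-1})\,\sigma^{n-2}(p_{n-1})\cdots p_1$ with all $s_i,p_i,u_{n-1}\in(E')^*$ of bounded length, which is exactly your recursion for $s_E,p_E$ unrolled; the estimates and the elimination of $n$ then coincide.
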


\begin{rema}
  Notice that as usual the assumptions on $\sigma$ made in the
  statement of Proposition \ref{gap-prefix} are easily satisfied by
  taking a convenient power of $\sigma$ if needed.
\end{rema}

\begin{proof}
Let $N\in \NN$. 
There exists $n\in \NN$ such that 

\begin{equation}
\label{petitn}
|\sigma^{n-1} (x_0) |\leq N\leq | \sigma^{n} (x_0) | .
\end{equation}

We start proving (1). As there exists a letter $e\in E'$ of maximal
growth having an occurrence in $\sigma (x_0)$ (and since \eqref{stab1}
is satisfied, $\sigma^ k(e)\in E^*$, for all $k\ge 1$) we obtain

$$
|\sigma^{n-2} (e) | \leq M(N) \leq |\sigma (x_0)|\max_{l\in A} |\sigma^{n-1} (l)|
$$

and from Lemma \ref{croissance} we deduce that there exist two
constants $C_1$ and $C_2$ not depending on $n$ such that

$$
C_1 \alpha^n n^d \leq M(N) \leq C_2 \alpha^n n^d .
$$

Let us prove (2).  The assertion (3) can be proved following the same
arguments.  We start proving the left inequality.  Proceeding as
before we obtain a constant $C'_1$ depending neither on $n$ nor $N$
such that

$$
C'_1 \alpha^n n^{d'} \leq M(N) .
$$

Moreover, from \eqref{petitn}, we deduce there exist two constants
$C''_1, C''_2$ depending neither on $n$ nor $N$ such that

\begin{align}
\label{temp}
C''_1 \log (N) \leq n \leq C''_2 \log (N) &  \hbox{ if } \alpha >1 \hbox{ and } \\
\label{temp2}
C''_1 N^{1/d} \leq n \leq C''_2 N^{1/d} &  \hbox{ if } \alpha =1 . 
\end{align}
 
This together with Lemma \ref{croissance} gives the left inequality.

Let us prove the right inequality.  Let $i$ be such that
$|x_{[i,i+M(N) ]}|_{E} = M(N)$.  We set $u = x_{[i,i+M(N) ]}$.  There
exist $u_1 \in E'^*$ having an occurrence in $\sigma^{n-1} (x_0)$ and
$p_1,s_1 \in E^*$ such that $u=s_1\sigma (u_1) p_1$ and $|s_1|$,
$|p_1|$ less than $m = \max\{ |\sigma (a)| ; a\in E \}$.  In the same
way there exist $u_2 \in A^*$ having an occurrence in $\sigma^{n-2}
(x_0)$ and $p_2,s_2 \in E'^*$ such that $u_1=s_2\sigma (u_2) p_2$ and
$|s_2|$, $|p_2|$ less than $m$.  We remark that $\sigma^2 (u_2)$
belongs to $E^*$.  From Hypothesis \eqref{stab1}, we conclude that
$u_2$ belongs to $E'^*$.  Hence there exist $u_1,\dots , u_{n-1} \in
E'^*$, $p_1,s_1 \in E^*$, $p_2, \dots , p_{n-1}, s_2,\dots , s_{n-1}
\in E'^* $ such that

\begin{equation}
\label{decomposition}
u = s_1 \sigma (s_2) \cdots \sigma^{n-2} (s_{n-1}) \sigma^{n-1} (u_{n-1}) \sigma^{n-2} (p_{n-1} )\cdots \sigma (p_2) p_1 ,
\end{equation}

$|p_i|$ and $|s_i|$ are less than $m$.  From this expression and Lemma
\ref{croissance} we deduce that there exists a constant $C'''_2$ such
that

\begin{equation}
\label{calcul-decomp}
|u| \leq C'''_2 \alpha^n n^{d'}.
\end{equation}

We conclude using \eqref{petitn} (together with Lemma \ref{croissance}) and \eqref{temp}.

\medskip

We now prove (4).
For the left inequality it works as before.
For the right inequality it also works as before except that once we obtain the decomposition \eqref{decomposition}
we find some constant $C$ such that $|u|\leq C \sum_{j=1}^{n-1} j^{d'}$. 
Consequently for some other constant $|u|\leq C n^{d'+1}$.
We conclude using Lemma \ref{croissance} and \eqref{temp}

\medskip

For (5) we proceed as in the previous case except we use \eqref{temp2}.
\end{proof}

We suppose there exists a letter $c\in C$ with infinitely many
occurrences in $x$ and that does not appear with bounded gaps in $x$.
Projecting to $\{ 0,1 \}$ we can suppose $C=\{ 0,1 \}$ and $c=1$.
W.l.o.g. we may assume that $\sigma$ and $\tau$ both satisfy
\eqref{stab1} (as usual taking a power of the substitution does not
alter its fixed points).  There exist $a\in A$ with infinitely many
occurrences in $y$ and a strictly increasing sequence $(p_n)_{n\in
  \NN}$ of positive integers such that the letter $c$ does not appear
in $\phi (\sigma^{p_n} (a) )$. Let $A(c)$ be the set of such letters.
We define $B(c)$ and $B_+$ as $A(c)$ and $A_+$ but with respect to
$\tau$ and $B$.

The sets $A(c)$ and $B(c)$ are non-empty.
Then, there exist a letter $a\in A(c)\cap A_+$ and a letter $b\in B(c)\cap B_+$ 
having infinitely many occurrences in $y$ and $z$, with growth type 
$(d^{'},\alpha^{'})\leq(d,\alpha)$ and $(e^{'},\beta^{'})\leq(e,\beta)$, 
respectively, 
where $(d^{'},\alpha^{'})$ and $(e^{'},\beta^{'})$ are maximal with respect to $A(c)$ and $B(c)$.

Because $M(N,y, \phi^{-1} (\{ 0\})) = M(N, z ,\psi^{-1} (\{ 0\}) )$,
from Proposition \ref{gap-prefix} we deduce that we have necessarily one of the following five situations:

\begin{equation}
  \label{eq:fivecases}
\left\{\begin{array}{l}
  (\alpha' ,d') = (\alpha , d) \ \text{ and }\  (\beta' ,e') = (\beta ,e);\\
  \alpha = \alpha'>1,\ \beta'=\beta \ \text{ and }\  d-d' = e-e';\\
\alpha > \alpha' >1\ \text{ and }\ \beta > \beta' >1;\\
\alpha > \alpha' =1\ \text{ and }\ \beta > \beta' =1;\\
\alpha = \alpha' =1,\ d'<d\ \text{ and }\ \beta = \beta' =1,\ e'<e.\\
\end{array}\right.
\end{equation}

We will consider these cases separately.
Before we establish some general facts that will be used in the treatment of these cases.

Let $w=w_0 \cdots w_n$ be a word belonging to $L(y)$ (resp. $L(z)$), 
we call $\gap (w)$ the largest integer $k$ such that 
there exists $i\in [0,n-k+1]$ for which the letter $c$ does not appear 
in $\phi (w_i \cdots w_{i+k-1})$ (resp. in $\psi (w_i \cdots w_{i+k-1})$).

The next lemma is stated for $\sigma$ but of course it also holds for
$\tau$.  Moreover we can assume the constant $K'$ is the same for the
two substitutions.

\begin{lemma}
\label{Kprime}
With notation introduced before, there exists a constant $K^{'}$ such
that for all $a''\in A$ we have:

\begin{align*}
\gap(\sigma^{n} (a'')) & \leq K^{'} n^{d'} {\alpha^{'}}^{n}   \hbox{ if } \alpha' > 1 \hbox{ and }\\
\gap(\sigma^{n} (a'')) & \leq K^{'} n^{d'+1}    \hbox{ if } \alpha' = 1
\end{align*}

for all $n\in \NN$.
\end{lemma}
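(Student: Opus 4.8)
The plan is to bound $\gap(\sigma^n(a''))$ by relating it to the quantity $M(\cdot)$ controlled by Proposition \ref{gap-prefix}, applied to the sub-alphabet $E=\phi^{-1}(\{0\})\subseteq A$ (the letters mapped by $\phi$ to $0$, i.e.\ the letters whose image does not contain $c=1$). By definition of $A(c)$ and of $(d',\alpha')$ as the maximal growth type among letters of $A(c)\cap A_+$, and using that $\sigma$ satisfies \eqref{stab1}, the relevant set $E'$ of letters $e$ with $\sigma(e)\in E^*$ has greatest growth order exactly $(d',\alpha')$ — or at any rate no larger — so Proposition \ref{gap-prefix} gives the matching upper bounds $M(N,x,E)\le C_2\,N^{?}$ of the stated polynomial/exponential shape.

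First I would fix $a''\in A$ and, writing $u=\sigma^n(a'')$, observe $\gap(u)\le \gap(\sigma^n(x_0))$ up to a bounded multiplicative factor, since $a''$ appears in some iterate $\sigma^j(x_0)$ (because $x$ is a proper fixed point and, after replacing $\sigma$ by a power, every letter occurs in $\sigma(x_0)$); thus a block of letters of $E$ of length $\gap(u)$ inside $\sigma^n(a'')$ sits inside $\sigma^{n+j}(x_0)$, whose length $N$ satisfies $N\le |\sigma^{n+j}(x_0)|$, and by Lemma \ref{croissance} $N\asymp \alpha^{n+j}(n+j)^d \asymp \alpha^n n^d$ when $\alpha>1$ (resp.\ $N\asymp n^d$ when $\alpha=1$). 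Hence $\gap(\sigma^n(a''))\le \gap(\sigma^{n+j}(x_0))\le M(N,x,E)$ with $N$ of that size.

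Next I would plug $N\asymp \alpha^n n^d$ (or $N\asymp n^d$) into the upper bound of the relevant case of Proposition \ref{gap-prefix}. In the cases where $\alpha=\alpha'$ this yields $M(N)\le C_2 N\asymp \alpha^n n^d$, giving $\gap(\sigma^n(a''))\le K' n^{d'}{\alpha'}^n$ with $d'=d$, $\alpha'=\alpha$, as claimed. In the case $\alpha>\alpha'>1$, the bound $M(N)\le C_2(\log N)^{d'-d\log\alpha'/\log\alpha}N^{\log\alpha'/\log\alpha}$ with $\log N\asymp n$ and $N^{\log\alpha'/\log\alpha}\asymp {\alpha'}^n n^{d\log\alpha'/\log\alpha}$ collapses to $\asymp n^{d'}{\alpha'}^n$, again matching. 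In the case $\alpha>\alpha'=1$, the bound $M(N)\le C_2(\log N/\log\alpha)^{d'+1}$ with $\log N\asymp n$ gives $\gap(\sigma^n(a''))\le K' n^{d'+1}$, which is exactly the $\alpha'=1$ clause. One should check the remaining sub-case $\alpha=\alpha'=1$, $d'<d$: there $N\asymp n^d$ and $M(N)\le C_2 N^{(d'+1)/d}\asymp n^{d'+1}$, again giving the $\alpha'=1$ bound. The same reasoning, with $\tau$, $\psi$, $z$, $(e',\beta')$ in place of $\sigma$, $\phi$, $y$, $(d',\alpha')$, yields the statement for $\tau$.

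The main obstacle I anticipate is bookkeeping rather than conceptual: one must verify that the sub-alphabet $E'=\{e\in A:\sigma(e)\in E^*\}$ attached to $E=\phi^{-1}(\{0\})$ really has greatest growth order $(d',\alpha')$ (the maximal growth type realized in $A(c)\cap A_+$), since Proposition \ref{gap-prefix} is phrased in terms of $E'$, whereas the definition of $(d',\alpha')$ was phrased in terms of $A(c)$; here one uses \eqref{stab1} together with the fact that $a\in A(c)$ means $c$ is absent from $\phi(\sigma^{p_n}(a))$ for infinitely many $n$, which forces a maximal-growth descendant of $a$ to lie in $E'$, and conversely no letter of $E'$ can exceed growth type $(d,\alpha)$. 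Taking the constant $K'$ to be the maximum over the finitely many letters $a''\in A$ (and symmetrically over $B$) of the constants produced above makes it uniform, as asserted in the statement. All the estimates of the shape ``$\log N\asymp n$'' and ``$N\asymp\alpha^n n^d$'' are the inequalities \eqref{petitn}, \eqref{temp}, \eqref{temp2} already recorded, so no new analysis is needed beyond substitution and simplification.
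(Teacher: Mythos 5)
Your argument is correct in substance, but it follows a genuinely different route from the paper's. The paper's own proof is one line: it repeats, for the word $\sigma^{n}(a'')$ itself, the decomposition \eqref{decomposition} and the estimate \eqref{calcul-decomp} from the proof of Proposition \ref{gap-prefix}; summing contributions of size $\lesssim j^{d'}{\alpha'}^{j}$ over $j\le n$ gives the two stated bounds at once, with no case distinction. You instead use Proposition \ref{gap-prefix} as a black box: you embed $\sigma^{n}(a'')$ into the prefix $\sigma^{n+j}(y_0)$ of $y$ (with $j=j(a'')$ bounded over the finitely many letters, after passing to a power of $\sigma$), so that $\gap(\sigma^{n}(a''))\le M\bigl(N,y,\phi^{-1}(\{0\})\bigr)$ with $N=|\sigma^{n+j}(y_0)|\asymp n^{d}\alpha^{n}$ (the two-sided estimate is justified because $y_0$ has maximal growth type, every letter occurring in some iterate of $y_0$), and then you substitute this $N$ into the upper bounds of the proposition case by case. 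This buys a proof with no new combinatorial decomposition, at the price of a five-case verification and of the identification, which you rightly flag, of the proposition's $(d',\alpha')$ (greatest growth over $E'=\{e:\sigma(e)\in E^{*}\}$) with the text's $(d',\alpha')$ (maximal growth over $A(c)$); note that the paper needs exactly the same identification, both here and already when deriving \eqref{eq:fivecases}, so this is not a debt specific to your route.

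Two small repairs. First, your sentence ``in the cases where $\alpha=\alpha'$ this yields $M(N)\le C_2N$'' only covers the case $(\alpha',d')=(\alpha,d)$; when $\alpha=\alpha'>1$ and $d'<d$ you must invoke case (2) of Proposition \ref{gap-prefix}, namely $M(N)\le C_2 N(\log N)^{d'-d}\asymp \alpha^{n}n^{d'}$, which is again exactly the claimed bound, so the hole is filled by the same one-line substitution. Second, in the converse half of your bookkeeping remark, ``no letter of $E'$ can exceed growth type $(d,\alpha)$'' is too weak: what you need is that the letters of $E'$ entering the estimate have growth type at most $(d',\alpha')$, and the reason is that, by \eqref{stab1}, any $e$ with $\sigma(e)\in E^{*}$ satisfies $\phi(\sigma^{m}(e))$ free of $c$ for all $m\ge 1$, hence belongs to $A(c)$ as soon as it occurs infinitely often in $y$, and maximality of $(d',\alpha')$ over $A(c)$ applies; this is the same (lightly glossed) point on which the paper's appeal to \eqref{decomposition}--\eqref{calcul-decomp} rests.
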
 

\begin{proof}
It suffices to proceed as we did before to obtain \eqref{decomposition} and then \eqref{calcul-decomp}.
\end{proof}

From Lemma \ref{croissance}, the following limits exist and are finite
and they deserve specific notation
$$\lim_{n\rightarrow +\infty }\frac{|\sigma^{n}
(a)|}{n^{d'}{\alpha'}^n}=:\mu (a) \quad\text{ and }\quad
\lim_{n\rightarrow +\infty }\frac{|\tau^{n} (b)|}{n^{e'}{\beta'}^n}=:\mu
(b).$$

\subsection{Some choices when $\alpha, \beta >1$}\label{53}

Here we suppose that the set $\Omega$ of Corollary \ref{density} is dense in $\RR_ +$.
There exist infinitely many prefixes of $y$ (resp. $z$) of the type $u_1 au_2a'$
(resp. $v_1 b v_2b'$) 
fulfilling the conditions $\imath)$ and $\imath\imath)$ below:

\medskip

$\imath$) 
The growth type of $u_1\in A^*$ and $a'\in A$ (resp. $v_1\in B^*$ and $b'\in B$) is maximal (Lemma \ref{croissance} allows such a configuration).

$\imath\imath$) 
The word $u_2$ (resp. $v_2$) does not contain a letter of maximal growth.

\medskip

We notice this is not the case when the growth type is $(d,1)$ because
in this case there is exactly one letter of growth type $(d,1)$ and it
appears exactly once in the fixed point: this is the first letter of
the fixed point.

Let $u_1au_2a'$ be a prefix of $y$ and $v_1bv_2b'$ be a prefix of $z$
fulfilling the conditions $\imath$) and $\imath\imath $).

From Corollary \ref{density} there exist four strictly increasing
sequences of integers $(m_i)_{\in \NN}$, $(n_i)_{\in \NN}$,
$(p_i)_{\in \NN}$ and $(q_i)_{\in \NN}$ such that

\begin{align}
\label{multind1}
\lim_{i\rightarrow +\infty}  \frac{n_i^d \alpha^{n_i}}{m_i^e\beta^{m_i}}  
= & 
\frac{2\lambda_{\tau} (v_1)}{2 \lambda_{\sigma}(u_1)  + 2 \lambda_{\sigma}(a) + \lambda_{\sigma}(a')} =: \gamma_1
\ \
{\rm and} \\
\label{multind2}
\lim_{i\rightarrow +\infty}  \frac{p_i^e\beta^{p_i}}{q_i^d \alpha^{q_i}} 
=&
\frac{2\lambda_{\sigma} (u_1)}{2\lambda_{\tau} (v_1) + 2\lambda_{\tau} (b)  + \lambda_{\tau}(b')} =: \gamma_2.
\end{align}

As a consequence of \eqref{multind1} and \eqref{multind2}, we have 

\begin{equation}
\label{nu} 
\lim_{i\rightarrow +\infty} \frac{n_i}{m_i} = \frac{\log\beta}{\log\alpha} \quad \text{ and }\quad
\lim_{i\rightarrow +\infty} \frac{p_i}{q_i} = \frac{\log\alpha}{\log\beta}.
\end{equation}

The sequence $z$ has infinitely many occurrences of letters of maximal growth.
Hence, in this case, we can take $v_1$ so long that

\begin{align}
\label{gammadeux}
\frac{2K' (2\gamma_2)^{\frac{\log \alpha'}{\log \alpha }}}{\mu (a)}
\cdot
\left(
\frac{\log \alpha}{\log \beta}
\right)^{e^{'} -e\frac{\log \alpha'}{\log \alpha}} 
< 1.
\end{align}

Using Lemma \ref{lambda} there exists $i_0$ such that for all $i\geq i_0$ we have
\begin{align}
\label{multind1bis}
\frac{|\sigma^{n_i}(u_1 a u_2)|}{|\tau^{m_i}(v_1)|} 
& \leq 1 \leq 
\frac{|\sigma^{n_i}(u_1 a u_2a')|}{|\tau^{m_i}(v_1 b v_2)|}
\ \ {\rm and} \\
\label{multind2bis}
\frac{|\tau^{p_i}(v_1 b v_2)|}{|\sigma^{q_i}(u_1)|} 
& \leq 1 \leq 
\frac{|\tau^{p_i}(v_1 b v_2b')|}{|\sigma^{q_i}(u_1 a u_2)|} .
\end{align}

It comes that the word $\psi (\tau^{m_i}(b v_2))$ (resp. $\phi (\sigma^{q_i} (a u_2))$) 
has an occurrence in $\phi (\sigma^{n_i}(a'))$ (resp. $\psi (\tau^{p_i}(b'))$).
To obtain a contradiction it suffices to have some $j\geq i_0$ such that 
$
\gap(\sigma^{n_j} (a'))/ \gap (\tau^{m_j} (b)) < 1 \ \ {\rm or} \ \ \gap(\tau^{p_j} (b'))/ \gap (\sigma^{q_j} (a))< 1.
$

We observe that $\gap (\tau^{m_j} (b)) = |\tau^{m_j} (b)|$ and $\gap (\sigma^{q_j} (a)) =|\sigma^{q_j} (a)|$.
We set $S_j = \gap(\sigma^{n_j} (a'))/ |\tau^{m_j} (b)|$ and $T_j = \gap (\tau^{p_j} (b'))/ |\sigma^{q_j} (a)|$.
Then,

\begin{align}
\label{contradiction}
\hbox{ it suffices to find some $j$ with  } S_j <1 \hbox{ or } T_j < 1 .
\end{align}

We have
\begin{align}
\label{ineg-S1}
S_i
\leq &
\frac{K' n_i^{d'}{\alpha^{'}}^{n_i}}{\mu (b) m_i^{e'} {\beta^{'}}^{m_i}} 
\cdot
\frac{\mu (b) m_i^{e'} {\beta^{'}}^{m_i}}{|\tau^{m_i} (b)|}
\leq
\frac{2K'}{\mu (b)} 
\cdot
\frac{n_i^{d'}\left( \alpha^{n_i} \right)^{\frac{\log \alpha'}{\log \alpha}}}{m_i^{e'}{\beta^{'}}^{m_i}}\\
\label{inegS-2}
\leq &
\frac{2K'}{\mu (b)}
\cdot
\frac{n_i^{d'}}{m_i^{e'}{\beta^{'}}^{m_i}}
\cdot
\left( 2\gamma_1 \frac{m_i^e \beta^{m_i}}{n_i^d} \right)^{\frac{\log \alpha'}{\log \alpha}} \\
\label{inegS-3}
\leq &
\frac{2K' (2 \gamma_1)^{\frac{\log \alpha'}{\log \alpha}}}{\mu (b)}
\cdot
\frac{n_i^{d^{'} -d\frac{\log \alpha'}{\log \alpha}}}{m_i^{e^{'} -e\frac{\log \alpha'}{\log \alpha}}  }
\cdot
\exp 
\left(
m_i 
\left(
\frac{\log \alpha'}{\log \alpha }\log \beta -\log \beta'  
\right)
\right)
\end{align}

and, with the same kind of computations

\begin{align}
\label{ineg-T1}
T_i
\leq &
\frac{K' p_i^{e^{'}}\beta'^{p_i}}{\mu (a) q_i^{d'} {\alpha^{'}}^{q_i}} 
\cdot
\frac{\mu (a) q_i^{d^{'}} {\alpha^{'}}^{q_i}}{|\sigma^{q_i} (a)|}
\leq
\frac{2K'}{\mu (a)} 
\cdot
\frac{p_i^{e'}\beta'^{p_i}}{q_i^{d'}(\alpha^{q_i})^{\frac{\log \alpha'}{\log \alpha}}}\\
\label{ineg-T3}
= &
\frac{2K' (2\gamma_2)^{\frac{\log \alpha'}{\log \alpha}}}{\mu (a)}
\cdot
\frac{p_i^{e^{'} -e\frac{\log \alpha'}{\log \alpha}}}{q_i^{d^{'} -d\frac{\log \alpha'}{\log \alpha}}  }
\cdot
\exp
\left(
p_i 
\left(
\log \beta' - \frac{\log \alpha'}{\log \alpha }\log \beta   
\right)
\right) .
\end{align}

\subsection{Remarks when $\alpha$ and $\beta$ are multiplicatively independent}\label{54}

In this case we necessarily have $\alpha >1$ and $\beta >1$. There
exists $K\geq 2$ and $j_0$ such that for all $i\geq j_0$ we have

\begin{align*}
\frac{1}{K}
\leq
\frac{n_i}{m_i} 
\leq K 
& , \quad
\frac{1}{K}
\leq
\frac{p_i}{q_i} 
\leq K, \\
\frac{n_i^d \alpha^{n_i}}{m_i^e\beta^{m_i}}
\leq
2 \gamma_1
& , \quad
\frac{p_i^e \beta^{p_i}}{q_i^d\alpha^{q_i}}
\leq 2 \gamma_2, \\
\frac{\mu (a) q_i^{d'}{\alpha'}^{q_i}}{|\sigma^{q_i} (a)|}
\leq 2
& , \quad
\frac{\mu (b) m_i^{e'}{\beta'}^{m_i}}{|\tau^{m_i} (b)|}
\leq
2.
\end{align*}
In the sequel we intensively use the previous inequalities and Lemma
\ref{Kprime}.  We can now proceed to a case study. In view of the
hypothesis of Theorem \ref{the:boundedgap}, we will not consider the
last case occurring in \eqref{eq:fivecases}. Subsections \ref{55} to
\ref{finn} correspond to these first four cases.

\subsection{$\boxed{(\alpha' ,d') = (\alpha , d) $ and $(\beta' ,e') = (\beta ,e)}$}\label{55}

We necessarily have $\alpha , \beta >1$.  From \eqref{ineg-T3} we get

\begin{align*}
T_i
\leq &
\frac{4\gamma_2 K' }{\mu (a) } 
\end{align*}
and we conclude using \eqref{gammadeux} and the argument
\eqref{contradiction}.

\subsection{$\boxed{\alpha' = \alpha>1, d'<d, \text{ and }\beta'=\beta >1$, $e'<e}$}

From Proposition \ref{gap-prefix}, it comes that $d-d'= e-e'$.

\subsubsection{$\alpha $ and $\beta$ are multiplicatively independent}

From \eqref{ineg-T1} and \eqref{nu} we have

\begin{align*}
T_i
\leq &
\frac{4\gamma_2 K'}{\mu (a)} 
\frac{p_i^{e'-e}}{q_i^{d'-d}} 
= 
\frac{4\gamma_2 K'}{\mu (a)} 
\left(\frac{p_i}{q_i}\right)^{e'-e} 
\longrightarrow \frac{4\gamma_2 K'}{\mu (a)} 
\left(\frac{\log \alpha}{\log \beta}\right)^{e'-e} <1 .
\end{align*}

Using  \eqref{gammadeux} we obtain  $T_i$ is strictly smaller than $1$ for some large enough $i$.
We conclude with the argument \eqref{contradiction}.

\subsubsection{$\alpha $ and $\beta$ are multiplicatively dependent}
\label{case-lin-dep}

We can suppose $\alpha = \beta $.
From the hypothesis, we necessarily have $d\not = e$.
From \eqref{ineg-T1} and for $i\geq j_0$ we have:

\begin{align*}
T_i
\leq 
\frac{4\gamma_2 K'}{\mu (a)} 
\left(\frac{p_i}{q_i}\right)^{e'-e} .
\end{align*}

From \eqref{nu} we observe that $\lim_{i\to \infty }p_i/q_i = 1$. 
We conclude using \eqref{gammadeux}.

\subsection{$\boxed{\alpha > \alpha' >1\text{ and }\beta > \beta' >1}$}

From Proposition \ref{gap-prefix}, we necessarily have 
$$\frac{\log \alpha'}{\log \alpha}= \frac{\log \beta'}{\log \beta}\quad\text{ and }\quad
e' -e\frac{\log \alpha'}{\log \alpha} = d' -d\frac{\log \alpha'}{\log \alpha}.$$

\subsubsection{$\alpha$ and $\beta$ multiplicatively independent}

From \eqref{ineg-T3} and \eqref{nu}  we have:

$$
T_i
\leq 
\frac{2K' (2\gamma_2)^{\frac{\log \alpha'}{\log \alpha}}}{\mu (a)}
\cdot
\left(
\frac{p_i}{q_i}
\right)^{e' -e\frac{\log \alpha'}{\log \alpha}}
\longrightarrow
\frac{2K' (2\gamma_2)^{\frac{\log \alpha'}{\log \alpha}}}{\mu (a)}
\cdot
\left(
\frac{\log \alpha}{\log \beta}
\right)^{e' -e\frac{\log \alpha'}{\log \alpha}}
$$
which is, from \eqref{gammadeux}, strictly smaller than $1$ for large enough $i$.

\subsubsection{$\alpha $ and $\beta$ are multiplicatively dependent}

We can suppose $\alpha = \beta $.
We necessarily have $d\not = e$.
It suffices to proceed as in the paragraph \ref{case-lin-dep}.

\subsection{$\boxed{\alpha > \alpha' =1\text{ and }\beta > \beta' =1}$}\label{finn}

From Proposition \ref{gap-prefix} we obtain that $e'-d'\leq 1$ and $d'-e'\leq 1$,
hence $|d'-e'|\leq 1$.

\subsubsection{$\alpha $ and $\beta$ are multiplicatively independent}

From \eqref{ineg-S1} and \eqref{ineg-T1} and for $i\geq j_0$ we have:

\begin{align*}
S_i
\leq 
\frac{K'}{\mu (b) } 
\frac{n_i^{d'}}{m_i^{e'}}
\hbox{ and }
T_i
\leq 
\frac{ K'}{\mu (a)} 
\frac{p_i^{e'}}{q_i^{d'}} .
\end{align*}

\medskip

a) Suppose $|e'-d'| = 1$.  From \eqref{nu} we deduce that either
$(T_i)_{i\in\NN}$ or $(S_i)_{i\in\NN}$ tends to $0$ for $i$ tending to
infinity.

b) Suppose $e'=d'$.
In this case for $i$ sufficiently large we have 

$$
T_i
\leq 
\frac{ K'}{\mu (a)} 
\left(\frac{p_i}{q_i} \right)^{e'}
\leq
\frac{2 K'}{\mu (a)} \left(\frac{\log \alpha}{\log \beta } \right)^{e'} .
$$

We conclude using \eqref{gammadeux}.

\subsubsection{$\alpha$ and $\beta$ multiplicatively dependent}

W.l.o.g. we suppose $\alpha = \beta$.
We necessarily have $\alpha = \beta >1$ and $d\not = e$.
From Proposition \ref{gap-prefix}, we obtain $|d'-e'|\leq 1$.

\medskip

a) Suppose $e'=d'$.
From \eqref{ineg-T1} and for $i\geq j_0$ we have:
\begin{align*}
T_i
\leq 
\frac{2 K'}{\mu (a)} 
\left(\frac{p_i}{q_i}\right)^{e'} .
\end{align*}

But, from \eqref{nu} we know $(p_i/q_i)_i$ tends to $1$.
We conclude using \eqref{gammadeux}.

\medskip

b) $d'=e'+1$.
From \eqref{ineg-T1} and for $i\geq j_0$ we have:

\begin{align*}
T_i
\leq 
\frac{2 K'}{\mu (a)} 
\frac{p_i^{e'}}{q_i^{e'+1}} .
\end{align*}

Using \eqref{nu} $(T_i)$ clearly goes to $0$.

\medskip

c) $e'=d'+1$. 
It can be treated as the case b).

\subsection{Consequence for the words and application to abstract numeration systems}

In the previous section we proved under the assumptions of Theorem
\ref{the:boundedgap} that the letters having infinitely many
occurrences in $x$ appear in $x$ with bounded gaps.  In this section
we deduce that the same result holds not only for letters but also for
words.

Consequently, we obtain an analogue of Cobham's theorem for one
substitution of polynomial growth (the other being exponential).
Theorem \ref{cobb} combined with the main theorem of \cite{Du1} leads
therefore to an extended version of Cobham's theorem. This latter
result expressed in terms of subsitutions can easily be translated
into the formalism of abstract numeration systems (see Corollary
\ref{abs1} and Remark \ref{abs2}).

\begin{coro}
\label{cor:boundgap}
Under the assumptions of Theorem \ref{the:boundedgap}, the words having
infinitely many occurrences in $x$ appear in $x$ with bounded gaps.
\end{coro}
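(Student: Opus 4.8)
The plan is to deduce the statement for words from the already-established statement for letters by a standard recoding argument on a higher block presentation of the fixed point. First I would fix a length $k\ge 1$ and consider, for the fixed point $y$ of $\sigma$, the $k$-block substitution $\sigma_k$ acting on the alphabet $L_k(y)$ of factors of length $k$ of $y$ (after replacing $\sigma$ by a suitable power so that each $\sigma(b)$ is long enough, which by Corollary \ref{puissance} does not change the growth type $(d,\alpha)$). The classical construction gives a substitution $\sigma_k$ on $L_k(y)$ with a proper fixed point $y^{(k)}$ whose letter-to-letter image under the sliding-block map $\pi_k:L_k(y)\to A$, $w_0\cdots w_{k-1}\mapsto w_0$, is again $y$; moreover the occurrences of a fixed factor $u\in L_k(y)$ in $y$ (viewed as positions of the word $u$ of length $k$) are exactly the occurrences of the letter $u$ in $y^{(k)}$. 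The crucial point is that $\sigma_k$ has the same growth type $(d,\alpha)$ as $\sigma$, since $|\sigma_k^n(w)|$ differs from $|\sigma^n(w_0\cdots w_{k-1})|$ by a bounded additive term (the few block-positions lost near the right boundary), so Lemma \ref{croissance} applies verbatim.

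Next I would do the same for $\tau$ and $z$, obtaining $\tau_k$ on $L_k(z)$ with proper fixed point $z^{(k)}$, again of growth type $(e,\beta)$. Now I need a common symbolic target: since $x=\phi(y)=\psi(z)$, a factor $v$ of $x$ of length $k$ corresponds, via $\phi$ and $\psi$, to finitely many factors of $y$ and of $z$ of length $k$; so I would define letter-to-letter morphisms $\Phi_k:L_k(y)\to L_k(x)$ and $\Psi_k:L_k(z)\to L_k(x)$ induced by applying $\phi$ (resp. $\psi$) coordinatewise, and set $x^{(k)}=\Phi_k(y^{(k)})=\Psi_k(z^{(k)})$, which is the $k$-block presentation of $x$. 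Then $x^{(k)}$ is simultaneously $(d,\alpha)$-substitutive (through $\sigma_k$) and $(e,\beta)$-substitutive (through $\tau_k$), so Theorem \ref{the:boundedgap} applies to $x^{(k)}$: every letter of $L_k(x)$ having infinitely many occurrences in $x^{(k)}$ appears in $x^{(k)}$ with bounded gaps. Translating back through $\pi_k$, a factor $u$ of $x$ of length $k$ occurs infinitely often in $x$ if and only if the letter $u\in L_k(x)$ occurs infinitely often in $x^{(k)}$, and the gaps between consecutive occurrences are the same in both pictures; hence $u$ appears in $x$ with bounded gaps. Since $k$ was arbitrary, this is exactly the conclusion of Corollary \ref{cor:boundgap}.

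The main obstacle is bookkeeping rather than a genuine difficulty: one must check carefully that the block substitutions $\sigma_k$, $\tau_k$ indeed satisfy the standing hypotheses imposed throughout the paper (a letter $a$ with $\sigma_k(a)=au$ and $|\sigma_k^n(a)|\to\infty$, non-erasingness via Proposition \ref{nonerasing}, and property \eqref{stab1} after passing to a power), and — most importantly — that passing to the $k$-block presentation preserves the growth type exactly and does not destroy the independence hypothesis (1), (2) or (3) of Theorem \ref{the:boundedgap}. The latter is immediate because the pair $(d,\alpha)$ is unchanged, so $(d,\alpha)\neq(e,\beta)$ and the relevant multiplicative-independence or degree conditions carry over unchanged. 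One also needs the elementary remark that a factor $u$ occurring infinitely often in $x$ forces its length-$k$ prefixes/extensions to behave consistently, but this is handled automatically by working at a single fixed $k=|u|$ and noting $u\in L(x)=L_k(x)$ by uniform recurrence-type arguments already implicit in the setup.
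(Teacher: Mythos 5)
Your proposal is correct and follows essentially the same route as the paper: pass to the $k$-block substitution of \cite{Qu}, observe via the sliding-block (first-coordinate) projection that the growth type is unchanged, and apply Theorem \ref{the:boundedgap} to a common letter-to-letter image of the two block fixed points (the paper uses the $\{0,1\}$ indicator sequence of occurrences of the word $u$ rather than the full block presentation $x^{(k)}$, which is an immaterial difference). One small inaccuracy: with this construction one has $|\sigma_k^n(w)|=|\sigma^n(w_0)|$, so it differs from $|\sigma^n(w_0\cdots w_{k-1})|$ by an amount that is in general unbounded, but the growth type is nevertheless preserved exactly because every letter of $A$ occurs as the first coordinate of some block of the proper fixed point.
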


\begin{proof}
  The proof is essentially the same as in \cite{Du1}. Let $u$ be a word
  having infinitely many occurrences in $x$. We set $|u|=n$. To prove
  that $u$ appears with bounded gaps in $x$ it suffices to prove that
  the letter $1$ appears with bounded gaps in the sequence
  $t\in\{0,1\}^\mathbb{N}$ defined by
  $$t_i=1,\quad\text{ if }\ x_{[i,i+n-1]}=u;$$
  and $0$ otherwise. 
  
  The sequence $y^{(n)} = ((y_i \cdots y_{i+n-1}) ; i\in \NN )$ is a
  fixed point of the substitution $\sigma_n : A_n \rightarrow A_n^*$
  where $A_n$ is the alphabet $A^n $, defined for all $(a_1\cdots
  a_n)$ in $A_n$ by
$$
\sigma_n ((a_1\cdots a_n)) = (b_1\cdots b_n)(b_2\cdots
b_{n+1})\cdots (b_{|\sigma (a_1)|}\cdots b_{|\sigma (a_1)|+n-1})
$$
where $\sigma(a_1\cdots a_n) = b_1\cdots b_k$ (for more details see
Section V.4 in \cite{Qu} for example).

Let $\rho : A_n \rightarrow A^{*}$ be the letter-to-letter morphism
defined by $\rho ( ( b_1\cdots b_n )) = b_1$ for all $(b_1\cdots
b_n)\in A_n$. We have $\rho \circ \sigma_n = \sigma \circ \rho$, and
then $\rho \circ \sigma_n^k = \sigma^k \circ \rho$.  $\sigma$ is of
growth type $(\alpha,d)$ then $y^{(n)}$ is $(\alpha,d)$-substitutive.

Let $f : A_n \rightarrow \{ 0,1 \}$ be the letter-to-letter morphism
 defined by

$$
f ( ( b_1\cdots b_n )) = 1 \hbox{ if } b_1\cdots b_n = u \hbox{ and } 0 \hbox{ otherwise.}
$$

It is easy to see that $f(y^{(n)})=t$ hence $t$ is
$(\alpha,d)$-substitutive. 
We proceed in the same way with $\tau$ and Theorem \ref{the:boundedgap}
concludes the proof.
\end{proof}

\begin{lemma}\cite[Th\'eor\`eme 4.1]{Pa}
\label{pan}
  Let $x$ be a proper fixed point of a substitution $\sigma : A\to
  A^{*}$. Let $B$ be the set of non-growing letters of $A$. If in $x$
  occur arbitrarily long words belonging to $B^*$, then there exists
  a growing letter $a\in A$ and $i\in \NN$ such that $\sigma^i (a) =
  vau$ (or $uav$) with $u\in B\setminus \{ \epsilon \}$.
\end{lemma}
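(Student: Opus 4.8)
The plan is to follow the long blocks of non‑growing letters backwards through the substitution and to locate, by a pigeon‑hole argument, a growing letter that reproduces itself next to such a block. First I would reduce to a convenient situation: replacing $\sigma$ by a power and projecting, I may assume $\sigma$ is non‑erasing and satisfies \eqref{stab1}. Writing $G=A\setminus B$ for the growing letters, one then checks the basic facts: $B^*$ is $\sigma$‑invariant (if $b\in B$ then $\sigma(b)\in B^*$, while if $g\in G$ then $\sigma(g)$ contains a letter of $G$); $x_0\in G$; the left and right borders of a maximal $B$‑block of $x$ lie in $G$; and $x$ has maximal $B$‑blocks of arbitrarily large length occurring arbitrarily far out (apply $\sigma^n$ to a fixed long $B$‑block and use $\sigma^n(x_0)\to\infty$). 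Put $m=\max_{c\in A}|\sigma(c)|$ and $C=\max_{b\in B}\sup_{n}|\sigma^n(b)|$, which is finite.

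The heart is a \emph{descent step}. Let $W$ be a maximal $B$‑block of $x$ with $|W|>2m$, left border $g\in G$, right border $h\in G$. Inside $x=\sigma(x)$, take the minimal window $\sigma(x_{[i,j]})$ covering the factor $gWh$. Then $j\ge i+2$; the part of $gWh$ lying strictly between the contributions of $\sigma(x_i)$ and $\sigma(x_j)$ is $\sigma(x_{i+1}\cdots x_{j-1})$, a factor of $W$, so $x_{i+1},\dots,x_{j-1}\in B$ and they form a maximal $B$‑block $W'$ of $x$ with left border $g':=x_i\in G$ and $|W'|\ge(|W|-2m)/m$; finally the suffix of $\sigma(g')$ lying in $gWh$ is of the form $g\,u$ with $u\in B^*$, hence $\sigma(g')=v\,g\,u$, $u\in B^*$. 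Iterating from a maximal $B$‑block $W_0$ chosen very long, I obtain a chain $g_0,g_1,\dots,g_T$ of growing letters — in fact the left $\sigma$‑ancestry of $g_0$ — with $\sigma(g_{t+1})=v_{t+1}\,g_t\,u_t$, $u_t\in B^*$, where $T$ can be made as large as I please (the descent runs while $|W_t|>2m$, and $|W_t|$ decreases only polynomially fast).

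Choosing $W_0$ long enough that $T>|G|$, pigeon‑hole gives $g_s=g_{s'}=:a$ with $s<s'$. Composing the relations along the cycle,
$\sigma^{s'-s}(a)=(\text{prefix})\,a\,U$ with $U=u_s\,\sigma(u_{s+1})\cdots\sigma^{s'-s-1}(u_{s'-1})\in B^*$, since $\sigma(B^*)\subseteq B^*$. If $U\neq\epsilon$ we are done: $\sigma^{s'-s}(a)=v\,a\,U$ with $U\in B^+$, which is the ``$vau$'' alternative. If $U=\epsilon$ then, $\sigma$ being non‑erasing, every $u_s,\dots,u_{s'-1}$ is empty, so $\sigma^{s'-s}(a)=Z\,a$ for some $Z\in A^*$; if moreover $Z\in B^*$ then $Z\neq\epsilon$ (because $a$ is growing) and $\sigma^{s'-s}(a)=Z\,a$ is the ``$uav$'' alternative. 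Running the same reduction from the right borders $h_t$ supplies the ``$uav$'' case symmetrically.

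The main obstacle is the residual case $U=\epsilon$ with $Z\notin B^*$. Here I would argue by contradiction: if the conclusion fails, then whenever the chain has a repetition $g_p=g_q$ with $p\le t<t+1\le q$, composing along it forces $u_t=\epsilon$ (else $U\neq\epsilon$ above); a short count shows that the edges that are \emph{not} so ``enclosed'' correspond to pairwise disjoint value blocks in $g_0,\dots,g_T$, hence number at most $|G|-1$, and likewise for the right borders — so all but at most $2(|G|-1)$ edges satisfy $u_t=u'_t=\epsilon$, i.e. $W_t=\sigma(W_{t+1})$. Now follow lengths: a run of such edges multiplies $|W_t|$ by at most $C$ regardless of its length (because $|\sigma^n(w)|\le C|w|$ for $w\in B^*$), each of the $\le 2(|G|-1)$ exceptional edges multiplies by at most $m$ and adds at most $2m$, and the descent terminates at some $W_T$ with $|W_T|\le 2m$; this bounds $|W_0|$ by an absolute constant $\Lambda(\sigma)$, contradicting the choice of a huge $W_0$. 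The one way the descent need not terminate is that the ancestry reaches $x_0$ with a $B$‑block of length $>2m$ immediately after $x_0$; but then $\sigma(x_0)=x_0\,w$ with $w\neq\epsilon$ already exhibits the configuration when $w\in B^*$, and when $w\notin B^*$ one iterates the (now fixed) block following $x_0$ and again invokes finiteness. Closing this degenerate case cleanly is the step I expect to require the most care.
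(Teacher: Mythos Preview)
The paper does not give its own proof of this lemma: it is quoted verbatim from Pansiot \cite[Th\'eor\`eme~4.1]{Pa} and used as a black box in the proof of Theorem~\ref{cobb}. So there is no ``paper's proof'' to compare against; I can only assess your argument on its own.

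Your strategy---pull a long maximal $B$-block back through $\sigma$, track the growing letter on its left (and right) border, and use pigeon-hole on the resulting chain of growing letters---is the natural one and is essentially how Pansiot proceeds. The descent step is correctly set up: since the suffix of $\sigma(g_{t+1})$ following $g_t$ lies inside $W_t\in B^*$, one does get $\sigma(g_{t+1})=v_{t+1}g_t u_t$ with $u_t\in B^*$, and the composition $\sigma^{s'-s}(a)=(\text{prefix})\,a\,U$ with $U\in B^*$ is exactly right. Your treatment of the residual case ($U=\epsilon$ on the left chain and on the right chain simultaneously) via the ``enclosed edge'' count is clean: non-enclosed edges are cut points separating the sequence $g_0,\dots,g_T$ into value-disjoint blocks, so there are at most $|G|-1$ of them on each side; along a run of good edges $W_t=\sigma^k(W_{t+k})$ with $W_{t+k}\in B^*$, hence $|W_t|\le C\,|W_{t+k}|$ uniformly in $k$. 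This bounds $|W_0|$ by an absolute constant once $|W_T|\le 2m$, giving the contradiction.

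Two remarks. First, the degenerate case you flag as ``the step requiring the most care'' is in fact the easiest: if the descent reaches $g_t=x_0$ at position $0$ with $|W_t|>2m$, then since $x_{[1,|w|]}=w$ where $\sigma(x_0)=x_0w$ and $|w|\le m-1<|W_t|$, the word $w$ is a prefix of $W_t$, hence $w\in B^*$; and $w\neq\epsilon$ because $x_0$ is growing. So $\sigma(x_0)=x_0 w$ with $w\in B^+$ already realises the ``$vau$'' alternative with $v=\epsilon$. Your sub-case ``$w\notin B^*$'' is vacuous. Second, your opening reduction ``projecting, I may assume $\sigma$ is non-erasing'' is glib: Proposition~\ref{nonerasing} changes the alphabet, and transferring the conclusion back to the original $\sigma$ is not automatic. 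In the paper's usage this is harmless (the lemma is only applied to substitutions already made non-erasing), but if you want the lemma in the generality stated you should either argue the transfer or simply assume $\sigma$ non-erasing from the outset and note that replacing $\sigma$ by a power (to obtain~\eqref{stab1}) only strengthens the conclusion.
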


\begin{theo}\label{cobb}
  Let $x\in C^\NN$ being both $(d,\alpha)$-substitutive and
  $(e,\beta)$-substitutive for two substitutions satisfying the point
  (3) of the hypothesis of Theorem \ref{the:boundedgap}.  Then $x$ is
  ultimately periodic.
\end{theo}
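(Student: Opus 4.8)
The plan is to combine Theorem \ref{the:boundedgap} with the structural dichotomy of Lemma \ref{pan}. Since condition (3) of Theorem \ref{the:boundedgap} is in force, the two substitutions are independent and not both of polynomial growth; say the $(e,\beta)$-substitution $\tau$ is of exponential growth ($\beta>1$), while the other may be polynomial. First I would invoke Theorem \ref{the:boundedgap}: every letter of $C$ occurring infinitely often in $x$ occurs with bounded gaps, and by Corollary \ref{cor:boundgap} the same holds for every factor occurring infinitely often. In particular $x$ is \emph{uniformly recurrent} on its ``recurrent part'': discarding a finite prefix if necessary, we may assume every letter of $C$ actually occurs infinitely often, so $x$ itself is uniformly recurrent.

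Next I would argue that uniform recurrence, together with one substitution being of polynomial growth, forces ultimate periodicity. Write $x = \phi(y)$ with $y$ a proper fixed point of the substitution $\sigma$ of growth type $(d,\alpha)$, and suppose this is the one with $\alpha = 1$ (the polynomial case; note condition (3) then forces $d\neq 0$, and if instead $\alpha>1$ we use $\tau$ and $z$ in the same way, but at least one of the two is genuinely polynomial only in the mixed case — here the real content is when one of them has $\Theta=1$). In the polynomial case, $\sigma$ has a non-growing letter (as observed after Lemma \ref{croissance}), and I claim $y$ contains arbitrarily long factors over the non-growing alphabet $B$. Indeed, for a letter $a$ of maximal growth type $(d,1)$, the prefix $\sigma^n(a_0)$ of $y$ has length polynomial in $n$ but contains only $O(1)$ (in fact a bounded number of) occurrences of maximal-growth letters — more precisely, iterating the block-triangular structure of \eqref{eq:triang} shows the maximal-growth letters sit in a single class that is entered a bounded number of times, so between consecutive maximal-growth letters one finds blocks over $B$ of length tending to infinity. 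Then Lemma \ref{pan} yields a growing letter $a$ and an index $i$ with $\sigma^i(a) = vau$ (or $uav$), $u\in B^+$, $v$ possibly empty.

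From this ``$\sigma^i(a) = vau$ with $u$ a nonempty word of non-growing letters'' I would derive a long periodic-looking region: iterating, $\sigma^{ik}(a)$ ends (or begins) with $\sigma^{i(k-1)}(u)\,\sigma^{i(k-2)}(u)\cdots \sigma^{i}(u)\, u$, and since the letters of $u$ are non-growing, $|\sigma^{ij}(u)|$ is bounded, so these iterates of $a$ contain arbitrarily long factors $w$ with $|w|$ unbounded drawn from the fixed bounded set $\{\sigma^{ij}(b) : b\in B, j\ge 0\}$; by pigeonhole a single such block repeats with unbounded multiplicity, producing arbitrarily long factors of $y$ of the form $t^{m}$ for a fixed nonempty word $t$. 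Applying $\phi$, the sequence $x$ contains arbitrarily long factors $(\phi(t))^{m}$. Now the punchline: a uniformly recurrent sequence containing arbitrarily long powers $w^m$ of a \emph{fixed} word $w$ must equal $w^{\omega}$ up to a shift, hence is periodic. Indeed, uniform recurrence gives a bound $R$ such that $w^2$ (say of length $2|w|$) occurs in every factor of length $R$; choosing $m$ with $m|w| > R$, the occurrence of $w^m$ forces $w^2$ to reoccur inside it in phase, and propagating this phase constraint along $x$ (using that any window of length $R$ lies inside a long enough occurrence of $w^m$ shifted appropriately) shows $x$ is periodic of period $|w|$ — in particular ultimately periodic, which is what we want.

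The main obstacle is the step extracting a genuine repeated block $t^m$ of unbounded length from the mere relation $\sigma^i(a)=vau$ with $u\in B^+$: one must control that the ``tail'' $\sigma^{i(k-1)}(u)\cdots\sigma^i(u)\,u$ is not only long but actually contains a long \emph{exact} repetition, not just a long word over a finite auxiliary alphabet — this requires a careful pigeonhole on the bounded set of images $\{\sigma^{ij}(b)\}$ and an argument that consecutive such images can be forced to coincide (e.g.\ by first passing to a power so that the non-growing letters are permuted cyclically, or are actually fixed, by $\sigma^i$, using Lemma \ref{stabilisation} and condition \eqref{stab1}). Once a fixed repeated block is in hand, the uniform-recurrence-implies-periodic argument is routine. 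I would also remark that in the purely exponential subcase of condition (3) this theorem is already covered by \cite{Du1}, so the new content is exactly the mixed polynomial/exponential case handled above.
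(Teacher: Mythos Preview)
Your plan is essentially the paper's own: apply Corollary~\ref{cor:boundgap}, exploit the polynomial substitution via Lemma~\ref{pan} to produce arbitrarily long powers $u^n$ of a fixed word in the fixed point, and deduce ultimate periodicity from the bounded-gap property. The paper is terser---it simply asserts the existence of such a $u$ (pointing to Lemma~\ref{pan}), takes $u$ of minimal length, and then defers the final step to Theorem~18 of \cite{Du1}---whereas you spell out the intermediate mechanics.

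Two small points to tighten. First, ``discard a finite prefix so that $x$ becomes uniformly recurrent'' is not quite justified: Corollary~\ref{cor:boundgap} only controls the factors that occur infinitely often, and the prefixes needed to kill the finitely-occurring factors of each length may grow without bound. You do not actually need full uniform recurrence; it suffices that each fixed power $w^k$ (which does occur infinitely often) has bounded gaps, and that is exactly what Corollary~\ref{cor:boundgap} gives. Second, your closing sketch assumes ``any window of length $R$ lies inside a long enough occurrence of $w^m$,'' which is precisely the periodicity you are trying to prove. The clean way (implicit in \cite{Du1}) is: take $w$ primitive; since $w^2$ occurs with bounded gap $R$, choose $m$ with $m|w|>R$ and look at an occurrence of $w^{m}$; every length-$R$ window overlapping it contains a $w^2$, and a Fine--Wilf argument forces these occurrences to be in phase, which then propagates across the whole sequence (past a finite prefix) by the bounded-gap property of $w^{2}$ itself. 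Equivalently, the minimal subsystem of the subshift of $x$ contains $w^{\omega}$, hence is a periodic orbit, and $x$ is ultimately periodic.
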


\begin{proof}
  From Theorem \ref{cor:boundgap} we know that the words appearing
  infinitely many times in $x$ occur with bounded gaps in $x$.
  Suppose $\beta = 1$ and let $z$ be the fixed point of $\tau$ that
  projects onto $x$.  The substitution $\tau$ being polynomial one can
  prove that there exists a word $u$ for which $u^n$ occurs in $z$ for
  all $n$ (for the sake of completeness, we recall Lemma \ref{pan}).
  We assume there is no shorter word having this property.  Then using
  the arguments of Theorem 18 in \cite{Du1} we achieve the proof.
\end{proof}

\begin{coro}\label{abs1}
  Let $S=(L,\Sigma,<)$ (resp. $T=(M,\Gamma,\prec)$) be an abstract
  numeration system where $L$ is a polynomial regular language (resp.
  $M$ is an exponential regular language). If a set $X$ of integers is
  both $S$-recognizable and $T$-recognizable, then $X$ is a finite
  union of arithmetic progressions.
\end{coro}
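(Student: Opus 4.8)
The plan is to reduce Corollary \ref{abs1} to the machinery already developed for substitutions, using the dictionary established in Sections \ref{sec3} and \ref{sublink}. Suppose $X\subset\NN$ is both $S$-recognizable and $T$-recognizable, where $S=(L,\Sigma,<)$ is built on a polynomial regular language $L$ and $T=(M,\Gamma,\prec)$ is built on an exponential regular language $M$. By Proposition \ref{prop:numsub} applied to each system, $\chi_X$ is the image under a (possibly erasing) letter-to-letter morphism of a fixed point of an $L$-substitution, and likewise of a fixed point of an $M$-substitution. Invoking the discussion in Subsection \ref{sublink}: since the canonical automaton of $L$ has growth type $(D,1)$ for some $D\ge 1$ (polynomial case) and the canonical automaton of $M$ has growth type $(D',\Theta)$ with $\Theta>1$ (exponential case), Propositions \ref{nonerasing} and \ref{nonerasing2} let us replace these by non-erasing substitutions without changing $\chi_X$, and conclude that $\chi_X$ is simultaneously $(D+1,1)$-substitutive and $(D',\Theta^l)$-substitutive for a suitable $l$.

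Next I would check that these two growth types satisfy condition (3) of Theorem \ref{the:boundedgap}, equivalently hypothesis (3) of Corollary \ref{density}. Indeed, with $(d,\alpha)=(D+1,1)$ and $(e,\beta)=(D',\Theta^l)$, we have $\alpha=1$ and $d=D+1\ge 2\neq 0$, and $(\alpha,\beta)=(1,\Theta^l)\neq(1,1)$ since $\Theta>1$. So the two substitutions are independent and not both of polynomial growth, which is exactly the setting of Theorem \ref{cobb}. Applying Theorem \ref{cobb} to $x=\chi_X$ yields that $\chi_X$ is ultimately periodic.

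Finally I would translate ``$\chi_X$ ultimately periodic'' back into a statement about $X$: a subset of $\NN$ whose characteristic sequence is ultimately periodic is precisely a finite union of arithmetic progressions (together with a finite set, which is itself a finite union of singleton progressions). This gives the conclusion.

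The main obstacle I expect is purely bookkeeping rather than conceptual: one must be careful that the growth-type normalizations of Subsection \ref{sublink} (the shift from $(D,1)$ to $(D+1,1)$ for the canonical substitution, and the passage from $\Theta$ to $\Theta^l$ after the de-erasing step) really do preserve the hypotheses of Theorem \ref{cobb}, and that ``polynomial regular language'' and ``exponential regular language'' are being used with exactly the meaning ``canonical automaton of growth type $(D,1)$'' resp. ``of growth type $(D',\Theta)$, $\Theta>1$'' — so that the dichotomy is genuinely exhaustive. Once that is in place, everything else is a direct citation of Theorem \ref{cobb} and the elementary equivalence between ultimate periodicity of $\chi_X$ and $X$ being a finite union of arithmetic progressions.
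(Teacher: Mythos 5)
Your proposal is correct and follows exactly the route of the paper, whose proof simply cites Theorem \ref{cobb} together with the discussion of Subsection \ref{sublink}; you have merely spelled out that discussion (the growth types $(D+1,1)$ and $(D',\Theta^l)$, the verification of hypothesis (3), and the standard equivalence between ultimate periodicity of $\chi_X$ and $X$ being a finite union of arithmetic progressions).
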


\begin{proof}
  This is a direct consequence of Theorem \ref{cobb} and the
  discussion made in subsection \ref{sublink}.
\end{proof}

\begin{rema}\label{abs2}
  If $S=(L,\Sigma,<)$ and $T=(M,\Gamma,\prec)$ are abstract numeration
  systems built on two exponential languages then a Cobham's theorem
  holds with the same assumptions as the ones considered in
  \cite{Du1}.
\end{rema}

We address the following conjecture for which partial answers are
given here and in \cite{Du1}.

\begin{conj}
Let $\sigma$ and $\tau$ be two independent substitutions having proper fixed points mapped on the sequence $x$
by letter-to-letter morphisms.
Then $x$ is ultimately periodic.
\end{conj}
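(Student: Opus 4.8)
The plan is to carry the whole argument through the reductions of Sections \ref{sec3} and \ref{sec4}, so that we may assume $\sigma:A\to A^*$ and $\tau:B\to B^*$ are non-erasing, satisfy \eqref{stab1}, and have proper fixed points $y$, $z$ together with letter-to-letter morphisms $\phi:A\to C$, $\psi:B\to C$ with $\phi(y)=\psi(z)=x$; their growth types $(d,\alpha)$ and $(e,\beta)$ are then different and satisfy one of the conditions of Corollary \ref{density}. Suppose, for a contradiction, that $x$ is not ultimately periodic. If $(\alpha,\beta)\neq(1,1)$, Theorem \ref{the:boundedgap} (hence Corollary \ref{cor:boundgap}) already shows that the factors of $x$ occurring infinitely often occur with bounded gaps, and then known techniques deliver the contradiction: Theorem \ref{cobb} when one of the growth types is polynomial, and the arguments of \cite{Du1} (after the usual reduction) when both are exponential. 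So the entire remaining content of the conjecture lies in the case $\alpha=\beta=1$; since the substitution hypothesis forces the maximal degree of each substitution to be at least $1$, independence then amounts to $d\neq e$, say $d>e\ge 1$.

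In this polynomial regime the first, and central, task is to prove the analogue of Theorem \ref{the:boundedgap}: every letter of $C$ with infinitely many occurrences in $x$ occurs there with bounded gaps. I would follow the proof of Theorem \ref{the:boundedgap} closely, but two ingredients must be replaced. First, the prefix configuration $\imath)$--$\imath\imath)$ of Subsection \ref{53} is unavailable, since a letter of growth type $(d,1)$ occurs exactly once in $y$, namely as $y_0$; the substitute is an induction on the degree $j$: for $j=1,\dots,d$ one builds two-sided estimates for the gaps created inside $\sigma^n(\ell)$ by the letters of degree $<j$, where $\ell$ is a letter of degree $j$, the last assertion of Lemma \ref{croissance} guaranteeing that $\sigma^n(\ell)$ contains, in the expected position, a long block over the degree-$(j-1)$ sub-alphabet, which is what makes the step go through, in place of the top-level configuration $u_1au_2a'$. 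Second, Proposition \ref{gap-prefix}(5) has to be strengthened: it currently gives only $C_1N^{d'/d}\le M(N,y,\phi^{-1}(0))\le C_2N^{(d'+1)/d}$, and the gap between the exponents $d'/d$ and $(d'+1)/d$ is too wide for the identity $M(N,y,\phi^{-1}(0))=M(N,z,\psi^{-1}(0))$ to produce a contradiction. The aim is to pin $M(N,y,\phi^{-1}(0))$ down to $\asymp N^{d'/d}(\log N)^{r}$, with $r$ explicit in terms of $d$, $d'$ and the degrees of the intermediate sub-alphabets, and likewise to $\asymp N^{e'/e}(\log N)^{s}$ on the $z$ side; then $d\neq e$ together with Lemma \ref{lemmadensity}\eqref{ld1} forces the main exponents to disagree in most configurations, and in the residual case $d'/d=e'/e$ one compares the secondary logarithmic terms, in the spirit of the relation $d-d'=e-e'$ used for the exponential case in Subsection \ref{55}.

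The hard part --- the reason this is only conjectured --- is exactly this sharpening. In the exponential regime the decomposition \eqref{decomposition} is wasteful but harmless, because a single geometric factor $\alpha^n$ dominates everything and one needs only that the blocks $s_i,p_i,u_i$ are short and drawn from the correct sub-alphabet. In the polynomial regime the $n$ blocks contribute polynomial amounts of comparable size, so one must track exactly the ``carry'' pattern --- which sub-alphabet, of which degree, each block actually belongs to at each of the $n$ steps --- rather than merely bound it; controlling these powers of $\log N$ precisely, and uniformly over the infinitely many configurations furnished by the density of $\{n^d/m^e : n,m\in\NN\}$ (Lemma \ref{lemmadensity}\eqref{ld1}), is the delicate point. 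Once matching estimates for $M(N)$ are available, the contradiction follows exactly as in Subsections \ref{55}--\ref{finn}, giving bounded gaps first for letters and then for all factors, since Corollary \ref{cor:boundgap} only invokes Theorem \ref{the:boundedgap} for the $n$-block substitutions, which are again polynomial of the same degrees. Finally, $\tau$ being polynomial, Lemma \ref{pan} furnishes a word $u$ with $u^n\in L(z)$ for every $n$, and the argument of \cite{Du1} then forces $x$ to be ultimately periodic, completing the proof.
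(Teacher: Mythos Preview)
The statement you are addressing is presented in the paper as an open \emph{conjecture}; the paper gives no proof of it, only the partial results of Theorem~\ref{the:boundedgap}, Theorem~\ref{cobb}, and the reference to \cite{Du1}. There is therefore no proof in the paper to compare your proposal against.

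Your reduction is accurate: you correctly observe that when $(\alpha,\beta)\neq(1,1)$ the paper (together with \cite{Du1} for the both-exponential case, cf.\ Remark~\ref{abs2}) already yields ultimate periodicity, and you correctly isolate the genuinely open case $\alpha=\beta=1$ with $d\neq e$. You also correctly name the two obstructions the paper itself flags: the prefix configuration $\imath)$--$\imath\imath)$ of Subsection~\ref{53} is unavailable because the letter of maximal polynomial growth occurs only once, and the bounds of Proposition~\ref{gap-prefix}(5) leave a gap of $1/d$ in the exponent which the identity $M(N,y,\phi^{-1}(0))=M(N,z,\psi^{-1}(0))$ cannot close.

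That said, what you have written is a research outline, not a proof, and you say so yourself (``The hard part --- the reason this is only conjectured\ldots''). The central step --- sharpening Proposition~\ref{gap-prefix}(5) to an asymptotic $M(N)\asymp N^{d'/d}(\log N)^{r}$ with $r$ determined by the chain of intermediate degrees --- is asserted but not carried out, and it is not even clear that $M(N)$ must take this shape: in the polynomial regime the decomposition \eqref{decomposition} contributes $n$ blocks of genuinely comparable size, and depending on the combinatorics of $E'$ the true order of $M(N)$ can sit anywhere between $N^{d'/d}$ and $N^{(d'+1)/d}$, not merely at the lower endpoint times a logarithmic correction. Your proposed ``induction on the degree $j$'' is a reasonable heuristic for replacing the missing configuration $\imath)$--$\imath\imath)$, but you have not shown that it produces estimates sharp enough to force a contradiction from $d\neq e$ (in particular in the residual case $d'/d=e'/e$). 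Until those two gaps are filled, the conjecture remains open, exactly as the paper states.
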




\begin{thebibliography}{MM}


\bibitem[AS]{AS} 
J.-P. Allouche, J. O. Shallit, 
{\it Automatic Sequences: Theory, Applications, Generalizations}, 
Cambridge University Press, Cambridge, 2003.
 
\bibitem[BH]{BH} 
V. Bruy{\`e}re, G. Hansel, 
{\it Bertrand numeration systems and recognizability}, 
Latin American Theoretical INformatics (Valpara{\'\i}so, 1995),
Theoret. Comput. Sci. {\bf 181} (1997), 17--43.
  
\bibitem[Co1]{Co1}
A. Cobham, 
{\it On the base-dependence of sets of numbers recognizable by finite automata},
 Math. Systems Theory {\bf 3} (1969), 186--192. 

\bibitem[Co2]{Co2} 
A. Cobham, 
{\it Uniform tag sequences}, 
Math. Systems Theory {\bf 6} (1972), 164--192. 

\bibitem[Du1]{Du1} 
F. Durand, 
{\it A theorem of Cobham for non-primitive substitutions}, 
Acta Arith. {\bf 104} (2002), 225--241.
  
\bibitem[Du2]{Du2}
F. Durand, 
{\it Sur les ensembles d'entiers reconnaissables}, 
J. Th\'eo. Nombres Bordeaux {\bf 10} (1998), 65--84.
  
\bibitem[Ei]{Ei} 
S. Eilenberg, 
{\it Automata, languages, and machines},
Vol. A. Pure and Applied Mathematics, Vol. 58. Academic Press, New-York, 1974.
  
\bibitem[Ha]{Ha}
G. Hansel, 
{\it Syst\`emes de num\'eration ind\'ependants et synd\'eti\-ci\-t\'e}, 
Theoret. Comput. Sci. {\bf 204} (1998), 119--130.

\bibitem[LM]{LindMarcus}
D. Lind, B. Marcus, 
{\it An introduction to symbolic dynamics and coding}, 
Cambridge University Press, Cambridge, 1995. 

 \bibitem[LR]{LR} 
P. B. A. Lecomte, M. Rigo, 
{\it Numeration systems on a regular language}, 
Theory Comput. Syst. {\bf 34} (2001), 27--44.

\bibitem[Pa]{Pa} 
J.-J. Pansiot, 
{\it Complexit\'e des facteurs des mots infinis engendr\'es par morphismes it\'er\'es}, 
Lect. Notes in Comp. Sci. {\bf 172}, 380--389, Springer, Berlin, 1984.
  
\bibitem[Qu]{Qu} 
M. Queff\'elec, 
{\it Substitution Dynamical Systems-Spectral Analysis}, 
Lect. Notes in Math. {\bf 1294}, Springer-Verlag, Berlin, 1987.

\bibitem[RM]{RM} 
M. Rigo and A. Maes, 
{\it More on generalized automatic sequences}, 
J. Autom. Lang. Comb. {\bf 7} (2002), 351--376.

\end{thebibliography}
\end{document}